\documentclass[12pt,reqno,tbtags]{amsart}
\usepackage{amsmath}
\usepackage{float}
\usepackage{graphicx}
\usepackage{amsthm}
\usepackage{amssymb}
\usepackage{framed}
\usepackage{tikz}
\usepackage[margin=1in]{geometry}
\usetikzlibrary{arrows}

\usepackage{caption,sansmath}
\DeclareCaptionFont{sansmath}{\sansmath}
\captionsetup{textfont={sf,sansmath}}



\newtheorem{thm}{Theorem}[section]
\newtheorem{lem}[thm]{Lemma}

\newtheorem{corollary}[thm]{Corollary}

\newtheorem{conjecture}{Conjecture}[section]

\newtheorem{claim}{Claim}

\newcommand{\brm}[1]{\operatorname{#1}}
\newcommand{\eps}{\varepsilon}
\newcommand{\pl}[1]{\brm{#1}}

\newcommand{\mbf}[1]{\mathbf{#1}}
\newcommand{\bb}[1]{\mathbb{#1}}

\newcommand{\mc}[1]{\mathcal{#1}}


\title[Erd\H{o}s-Ko-Rado theorem for Lagrangians]{A Tur\'an theorem for extensions via an Erd\H{o}s-Ko-Rado theorem for Lagrangians}
\author{Adam Bene Watts}
\address{Physics Department,  MIT}
\email{abenewat@mit.edu}
\author{Sergey Norin}
\address{Department of Mathematics and Statistics, McGill University}
\email{snorin@math.mcgill.ca}
\thanks{Supported by an NSERC grant 418520}
\author{Liana Yepremyan}
\address{Mathematical Institute, University of Oxford} \email{ yepremyan@maths.oxford.ac.uk} 
\thanks{Research supported in part by ERC Consolidator Grant 647678
}
\date{}

\begin{document}
\begin{abstract}
The \emph{extension} of an $r$-uniform hypergraph $G$ is obtained from it by adding for every pair of vertices of $G$, which is not covered by an edge in $G$, an extra edge containing this pair and $(r-2)$ new vertices. In this paper we determine the Tur\'an number of the extension of an $r$-graph consisting of two vertex-disjoint edges, settling a conjecture of Hefetz and Keevash, who previously determined this Tur\'an number for  $r=3$. As the key ingredient of the proof we show that the  Lagrangian of intersecting $r$-graphs is maximized by principally intersecting $r$-graphs for $r \geq 4$. 
\end{abstract}

\maketitle
\section{Introduction}
In this paper we consider $r$-uniform hypergraphs, which we call \emph{$r$-graphs} for brevity. We denote the vertex set of an $r$-graph $ {G}$ by $V( {G})$, the number of its vertices by $\brm{v}( {G})$ and the number of edges by \(\pl{e}( {G})\). (We use $ {G}$ to denote both the $r$-graph itself and its edge set.) An $r$-graph $ {G}$ is called \emph{$F$-free} if it does not contain $F$ as a subgraph. We denote the class of all ${F}$-free $r$-graphs by $\brm{Forb}({F})$.  The \emph{Tur\'{a}n function} $\brm{ex}(n,{F})$ is the maximum size of an ${F}$-free $r$-graph of order $n$:
\[\brm{ex}(n,F) = \max\left\{\pl{e}( {G}) : \brm{v}( {G})=n,\:  {G} \in \brm{Forb}({F}) \right\}.\]
 The \emph{Tur\'{a}n density} of  an $r$-graph $F$ is defined to be the following limit (which was shown to exist by Katona, Nemetz and Simonovits~\cite{katona}):
$$\pi({F})  = \lim_{n \rightarrow \infty}\frac{\brm{ex}(n, {F})}{{n \choose r}}.$$

The \emph{extension} of an $r$-graph $ {F}$ is an \(r\)-graph, denoted by \(\brm{Ext}( {F})\), obtained from \( {F}\) by adding an extra edge for every uncovered pair of vertices containing this pair and $(r-2)$ new vertices. While in general the study of Tur\'an numbers of hypergraphs is a notoriously hard topic, a robust toolkit of stability arguments which can be used to find $\brm{ex}(n,\brm{Ext}( {F}))$, once the maximum Lagrangian of an $F$-free $r$-graph is determined, has been developed in ~\cite{jiang,extensions,gentriangle, pikhurko}. Using such a stability argument the Tur\'an number of the extension of an edgeless $r$-graph has been determined by Pikhurko in~\cite{pikhurko}. Pikhurko's result has been extended in~\cite{jiang, extensions} to determine the Tur\'an number of the extension of all hypergraphs obtained from a fixed $r$-graph by adding sufficiently many isolated vertices. Our result also relies on stability techniques, including  the generic toolkit, which we refer to as \emph{the local stability method}, developed by two of us in~\cite{extensions,gentriangle}. 

In~\cite{hefetzkeevash} Hefetz and Keevash defined the $r$-graph
$K_{r,r}^{(r)}$ to be the extension of the $r$-graph consisting of two disjoint edges. In the same paper the authors determined $\brm{ex}( {K}_{3,3}^{(3)},n)$ for large $n$. To state their result we need to define the balanced blowup \( {T}_5^{3}(n)\) of \( {K}_5^{(3)}\) on \(n\) vertices, where \( {K}_5^{(3)}\) denotes the complete \(3\)-graph on \(5\) vertices. The $3$-graph \( {T}_5^{3}(n)\) is obtained by partitioning the vertex set of size \(n\) into five parts of as equal sizes as possible, and  defining the edges of  \( {T}_5^{3}(n)\) to be the triples of vertices belonging to three distinct parts. 

\begin{thm}[Hefetz and Keevash,~\cite{hefetzkeevash}]\label{hefetzkeevash}For sufficiently large \(n\), \(\brm{ex}(n,  {K}_{3,3}^3) = \brm{e}( {T}_5^3(n))\)  and, moreover, for such $n$ the unique largest \( {K}_{3,3}^3\)-free \(3\)-graph on \(n\) vertices is \( {T}_5^3(n)\).
\end{thm}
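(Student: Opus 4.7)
The plan is to follow the Lagrangian-plus-stability route outlined in the introduction. Since $K_{3,3}^{(3)} = \brm{Ext}(F)$ where $F$ consists of two disjoint $3$-edges, the class of $F$-free $3$-graphs is exactly the class of intersecting $3$-graphs, and the first step is to establish the Lagrangian bound $\lambda(G) \leq \lambda(K_5^{(3)}) = 2/25$ for every intersecting $3$-graph $G$, with equality characterizing $K_5^{(3)}$. I would prove this by taking an optimal weighting $\mathbf{x}$ on $G$, applying the standard Motzkin--Straus-type reductions (delete zero-weight vertices, merge vertices with identical links, equalize link-weights across the support $S$), and arguing that if $|S| \geq 6$, then either $G[S]$ fails to be intersecting or an explicit two-variable perturbation of $\mathbf{x}$ strictly increases $\lambda$. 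Forcing $|S| \leq 5$ yields the bound immediately.

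From the Lagrangian bound, the asymptotic density $\pi(K_{3,3}^{(3)}) = 12/25$ is standard: $T_5^3(n)$ is $K_{3,3}^{(3)}$-free because any two disjoint edges of a $5$-partite $3$-graph must reuse one of the $5$ parts, giving an uncovered crossing pair inside that part, which furnishes the lower bound; and the extension-averaging principle $\pi(\brm{Ext}(F)) \leq r!\cdot\max\{\lambda(G) : G\in \brm{Forb}(F)\}$ gives the matching upper bound. To upgrade to the exact Tur\'an number, I would first prove a stability statement: every $K_{3,3}^{(3)}$-free $3$-graph with $(1-o(1))\brm{e}(T_5^3(n))$ edges differs from $T_5^3(n)$ by $o(n^3)$ edges. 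This follows from supersaturation combined with the equality case of the Lagrangian bound. I would then complete the argument via the local stability machinery referenced in the introduction: assume an extremal $K_{3,3}^{(3)}$-free $G$; use stability to obtain a near-canonical partition $V_1 \cup \cdots \cup V_5$; and show that every deviation from the $T_5^3(n)$ structure (an edge inside a part, a missing $3$-partite edge, or a vertex whose link deviates from the canonical link) can be parlayed into two disjoint edges of $G$ whose nine crossing pairs are all covered by further edges of $G$.

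The principal obstacle is twofold. First, the equality case in the Lagrangian bound for intersecting $3$-graphs is essential for the stability step, and pinning down exactly when equality holds requires a careful analysis of ``critical'' configurations at the support, especially distinguishing $K_5^{(3)}$ from its weighted near-competitors. Second, the local-stability endgame is delicate: every local perturbation of $T_5^3(n)$ must be ruled out, and I expect the technical core of the proof to lie in this step, where each type of deviation from the canonical partition must be shown to generate the full covered-disjoint-pair configuration that witnesses a forbidden $K_{3,3}^{(3)}$.
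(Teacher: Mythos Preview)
The paper does not contain a proof of this theorem: it is stated as a result of Hefetz and Keevash~\cite{hefetzkeevash} and cited without proof. There is therefore no proof in the paper to compare your proposal against.

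That said, your outline is consistent with what the paper says \emph{about} the Hefetz--Keevash argument: the paper explicitly notes that their proof ``uses the stability method and Lagrangians'' and that ``a key ingredient of the proof of Theorem~\ref{hefetzkeevash}, Hefetz and Keevash show that the maximum Lagrangian of intersecting $3$-graphs is uniquely achieved by $K_5^{(3)}$.'' Your plan --- Lagrangian bound on intersecting $3$-graphs, asymptotic density via Lemma~\ref{lem:hom}/\ref{lem:turandenskrr}, then stability plus a local endgame --- matches this description at the level of strategy. Whether the specific tactical steps you sketch (the Motzkin--Straus reduction to $|S|\le 5$, the particular deviation analysis for the endgame) coincide with those in~\cite{hefetzkeevash} cannot be assessed from the present paper.
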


In this paper we extend the results of~\cite{hefetzkeevash}  and  determine \(\brm{ex}( {K}_{r,r}^{(r)},n)\) for  all \(r\geq 4\) and large $n$. The structure of extremal hypergraphs is different from case $r=3$.
 We say that a partition $(A,B)$ of the vertex set of an $r$-graph $ {H}$ is a \emph{star partition} if $|e \cap A|=1$ for every $e \in  {H}$. We say that $ {H}$ is a \emph{star} if it admits a star partition.
We denote by $ {S}^{(r)}[A,B]$ the unique maximal $r$-graph which is a star with a partition $(A,B)$. Finally, we denote by $ {S}^{(r)}(n)$ the star on $n$ vertices with the maximum number of edges. (It is easy to see that $\brm{e}(S^{(r)}(n))=(1-1/r)^{r-1}\binom{n}{r}+o(n^r)$, and that if $(A,B)$ is a star partition of $ {S}^{(r)}(n)$ then $|A|=n/r + o(n)$.) We are now ready to state our main result, which confirms the aforementioned conjecture of  Hefetz and Keevash~\cite{hefetzkeevash}.

\begin{thm}\label{mainresult} For every \(r\geq 4\), there exists \(n_0:=n_0(r)\) such that 
	\[\brm{ex}(n,{K}_{r,r}^r) = \brm{e}( {S}^{(r)}(n))\] for all $n > n_0(r)$ and, moreover,  every \( {K}_{r,r}^{(r)}\)-free \(r\)-graph on \(n\) vertices with maximum number of edges is a star.
\end{thm}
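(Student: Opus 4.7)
The plan is to combine the local stability method of \cite{extensions,gentriangle} with a new Erd\H{o}s--Ko--Rado-type inequality for the Lagrangian. Write $F$ for the $r$-graph on $2r$ vertices consisting of two disjoint edges, so that $K_{r,r}^{(r)} = \operatorname{Ext}(F)$; then $\operatorname{Forb}(F)$ is exactly the class of \emph{intersecting} $r$-graphs. The local stability framework says, roughly, that once one proves a tight and stable upper bound on the Lagrangian of $F$-free $r$-graphs, one automatically obtains both the exact Tur\'an number of $\operatorname{Ext}(F)$ and uniqueness of the extremal $r$-graph for large $n$. Thus the core of the argument reduces to the Lagrangian Erd\H{o}s--Ko--Rado theorem advertised in the abstract:
\begin{equation*}
(\star) \quad \lambda(G)\leq \lambda(S^{(r)})\ \text{ for every intersecting $r$-graph $G$ when $r\geq 4$,}
\end{equation*}
with equality iff $G$ is principally intersecting (i.e.\ all edges share a common vertex).

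To attack $(\star)$ I would take an intersecting $G$ maximizing $\lambda$ together with an optimal weight vector $\mathbf{x}$, pick a vertex $v$ of maximum weight in the support, and decompose
\[
\lambda(G,\mathbf{x}) \;=\; x_v \cdot P_{L_v}(\mathbf{x}) \;+\; P_{G_{\bar v}}(\mathbf{x}),
\]
where $L_v$ is the link $(r-1)$-graph of $v$, $G_{\bar v}=G\setminus\{e:v\in e\}$, and $P_H$ is the Lagrangian polynomial of $H$. The goal is to show that $G_{\bar v}=\emptyset$ at the optimum, whereupon $G$ is a star and the one-variable maximization of $x_v(1-x_v)^{r-1}/(r-1)!$ yields the value $(r-1)^{r-1}/(r^r(r-1)!)$. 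If $G_{\bar v}\neq\emptyset$, each edge of $G_{\bar v}$ must hit every edge through $v$, which sharply constrains $L_v$; combined with the KKT optimality conditions $\sum_{e\ni u}\prod_{i\in e\setminus u} x_i = \lambda$ for every $u\in\operatorname{supp}(\mathbf{x})$, this should power a weight-transfer argument that moves the mass of $V(G_{\bar v})\setminus\{v\}$ onto $v$ and strictly increases $\lambda$. A Frankl--R\"odl-style symmetrization step --- merging two vertices with comparable links --- would first be applied to reduce the support of the extremum to a small set, turning $(\star)$ into a finite polynomial inequality that can be checked by hand.

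The principal obstacle is $(\star)$ itself, and specifically the asymmetry between $r=3$ and $r\geq 4$. For $r=3$ the inequality is simply false: the intersecting $3$-graph $K_5^{(3)}$ has Lagrangian $2/25 > 2/27 = \lambda(S^{(3)})$, which is precisely why the extremal configuration in Theorem \ref{hefetzkeevash} is a blow-up of $K_5^{(3)}$ and not a star. Any proof of $(\star)$ must therefore use $r\geq 4$ quantitatively --- a monotonicity or convexity inequality that flips sign at $r=4$. Pinning this inequality down in the reduced finite-support case, and handling the equality case carefully enough that the output can be fed into the stability black box of \cite{extensions,gentriangle}, is where I expect the real technical work. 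Once $(\star)$ and its stability version are in hand, the exact Tur\'an bound and the uniqueness of $S^{(r)}(n)$ claimed in Theorem \ref{mainresult} follow by direct application of the local stability machinery.
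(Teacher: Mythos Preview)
Your high-level plan matches the paper's: reduce Theorem~\ref{mainresult} to a quantitative Lagrangian Erd\H{o}s--Ko--Rado inequality for intersecting $r$-graphs, then feed it into the local stability machinery of~\cite{extensions,gentriangle}. But there are two genuine gaps, one structural and one technical.

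The structural gap is that the black box you invoke (stated here as Theorem~\ref{thm:extensions}) has \emph{two} hypotheses, not one: the Lagrangian gap (T2) \emph{and} vertex local stability of $\operatorname{Forb}(K_{r,r}^{(r)})$ with respect to the family of stars (T1). You write as though the Lagrangian input alone suffices, but (T1) must be verified separately: one has to show that any $K_{r,r}^{(r)}$-free $r$-graph that is close to a star in edit distance and has near-maximum minimum degree is \emph{exactly} a star. The paper devotes all of Section~\ref{vertexlocalstability} (Theorem~\ref{vertexstab}) to this, via a probabilistic ``perfect-pair'' sampling argument; it is not a consequence of $(\star)$ or its stability version.

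The technical gap concerns $(\star)$ itself. The paper does \emph{not} proceed via KKT conditions and Frankl--R\"odl symmetrization on a finite support. Instead it passes to \emph{weighted intersecting set systems} $(G,s,p)$---generating sets in the Ahlswede--Khachatrian sense~\cite{complete}, with the mass outside $[s]$ collapsed to a single atom $p_\infty$---and proves a uniform gap $w_p(G)\le L_r-c_r$ for non-principal $G$ (Theorem~\ref{t:wiss}). The engine is left-compression $R_{ij}$ together with a doubling lemma (Lemma~\ref{l:contribution}) comparing $w_{\bar p}(e\setminus\{s\})$ with $w_p(e)$, feeding an induction on $s$; the residual cases are finite calculations. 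Your symmetrization sketch does not obviously reduce to a finite problem, precisely because the supremum $L_r/r!$ for principal families is not attained by any finite $r$-graph but only approached as the support grows; the w.i.s.s.\ reformulation is exactly what absorbs this. Relatedly, your equality clause in $(\star)$ is slightly off: what (T2) needs, and what the paper proves, is the strict uniform gap $\lambda(G)\le (L_r-c_r)/r!$ for every non-principal intersecting $G$, not an attained maximum.
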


The case $r=4$ of Theorem~\ref{mainresult} has been independently established by Wu, Peng and Chen~\cite{WuPengChen}.
The proof of Theorem~\ref{mainresult}, as well as the proof of Theorem~\ref{hefetzkeevash}, uses the stability method and Lagrangians. The \emph{Lagrangian} $\lambda( {F})$ of an \(r\)-graph \( {F}\) is defined as
\[\lambda( {F}) = \max_{p}{\sum_{e\in {F}}{\prod_{v\in e}{p(v)}}},\]
where maximum is taken over all probability distributions on the vertex set \(V( {F})\), that is, the set of functions $p: V( {F}) \to [0,1]$ such that $\sum_{v \in V( {F})}p(v)=1$. 

The Lagrangian function for graphs was introduced by Motzkin and Straus~\cite{motzkinstraus}, who used it to give a new proof of Tur\'{a}n's Theorem. For hypergraphs, it was introduced independently by Frankl and R\"{o}dl \cite{franklrodljumps} and Sidorenko \cite{sidorenko1987}, who also established some important properties of the function. In particular, it was shown by them that for any $r$-graph, the  Lagrangian  is acheived on a subgraph that \emph{covers pairs}, that is, an $r$-graph in which every pair is contained in an edge (for $2$-graphs, this simplifies to maximum sub-clique.) The Langrangian function is  closely related to Tur\'an density of graphs. 
For any two \(r\)-graphs \({F}\)  and \({G}\), any edge-preserving map \(\varphi:V({F})\rightarrow V({G})\) is called \emph{homomorphism}, that is, for every \(f\in {F}\), \(\varphi(F)\in {G}\). An $r$-graph \({G}\) is called \({F}\)-\emph{hom-free} if there is no homomorphism from \({F}\) to \({G}\). The following lemma  was established by Frankl, F\"uredi in~\cite{franklrodljumps} and independently by Sidorenko in ~\cite{sidorenko1987}.
\begin{lem}\label{lem:hom} For any $r$-graph $F$, 
\[\pi({{F}}) = r!\sup_{{G}\in {\brm{Forb}}_{\brm{ hom}}({F})}{\lambda({G})},\]
where ${\brm{Forb}}_{\brm{hom}}({F})$ is the family of all  \(r\)-graphs that  are \({F}\)-hom-free.
\end{lem}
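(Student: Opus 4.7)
The plan is to prove the two inequalities $\pi(F) \geq r!\lambda^*$ and $\pi(F) \leq r!\lambda^*$ separately, where $\lambda^* := \sup_{G\in\brm{Forb}_{\brm{hom}}(F)} \lambda(G)$. The lower bound follows from a direct blow-up construction; the upper bound uses a ``cluster hypergraph'' reduction applied to an almost-extremal $F$-free $r$-graph.

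For the lower bound, I take any $F$-hom-free $r$-graph $G$ on vertex set $\{v_1,\ldots,v_k\}$ together with a distribution $p=(p_1,\ldots,p_k)$ on $V(G)$ attaining $\lambda(G)$. For each large $n$ I form the blow-up $\tilde G_n$ by replacing $v_i$ with $n_i:=\lfloor p_i n\rfloor$ twins and placing an edge on every transversal of every edge of $G$. Any embedding $F\hookrightarrow \tilde G_n$ composes with the canonical projection $\tilde G_n\to G$ to give a homomorphism $F\to G$, contradicting $F$-hom-freeness, so $\tilde G_n$ is $F$-free. A direct count yields
\[
\brm{e}(\tilde G_n) \;=\; \sum_{\{v_{i_1},\ldots,v_{i_r}\}\in G} n_{i_1}\cdots n_{i_r} \;=\; \lambda(G)\,n^r - O(n^{r-1}),
\]
so $\brm{ex}(n,F)/\binom{n}{r}\geq r!\lambda(G) - o(1)$ and $\pi(F)\geq r!\lambda(G)$. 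Taking the supremum over $G$ gives $\pi(F)\geq r!\lambda^*$.

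For the upper bound, fix $\delta,\eta>0$ and a large integer $k$. Let $H$ be an $F$-free $r$-graph on $n$ vertices with $\brm{e}(H)/\binom{n}{r}\geq \pi(F)-\delta$ and fix an equitable partition $V(H)=V_1\sqcup\cdots\sqcup V_k$. I define the reduced $r$-graph $G^*$ on $[k]$ by declaring $\{i_1,\ldots,i_r\}\in G^*$ exactly when at least $\eta(n/k)^r$ edges of $H$ have exactly one vertex in each $V_{i_j}$. The key claim is that $G^*$ is $F$-hom-free once $n$ is large enough: given a homomorphism $\varphi\colon F\to G^*$, each edge $f\in F$ is covered by a pool of at least $\eta(n/k)^r$ transversal edges of $H$ in the parts $V_{\varphi(u)}$, $u\in f$, and a standard greedy / probabilistic embedding selects $\psi(u)\in V_{\varphi(u)}$ vertex by vertex so that $\psi$ is injective and $\psi(f)\in H$ for every $f\in F$, contradicting $F$-freeness of $H$. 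Then I partition the edges of $H$ by type. Edges meeting fewer than $r$ parts contribute $O(n^r/k)$; edges spanning $r$ distinct parts whose index set is not in $G^*$ contribute at most $\eta\binom{k}{r}(n/k)^r \leq \eta n^r$; the remaining edges contribute at most $\brm{e}(G^*)(n/k)^r + O(n^{r-1})$. Using $\lambda(G^*)\geq \brm{e}(G^*)/k^r$ from the uniform distribution on $[k]$, this gives
\[
r!\,\lambda(G^*) \;\geq\; r!\,\frac{\brm{e}(H)}{n^r} - r!\eta - O(1/k) \;\geq\; \pi(F) - \delta - r!\eta - O(1/k).
\]
Since $G^*\in\brm{Forb}_{\brm{hom}}(F)$, sending first $n\to\infty$, then $k\to\infty$, then $\eta,\delta\to 0$ yields $r!\lambda^*\geq \pi(F)$.

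The main obstacle is the hom-freeness of $G^*$: morally it is supersaturation (many dense transversals should force an $F$), but one must verify that the embedding can be made simultaneously consistent on all edges of $F$ and injective on $V(F)$. The cleanest execution is to build $\psi$ one vertex at a time, at each step discarding the $O(1)$ choices that violate injectivity and showing that a positive fraction of the remaining choices still extend to a valid embedding — this works because $v(F)$ is a fixed constant while $|V_i|=\Theta(n/k)\to\infty$, so the $\eta$-density at each edge of $F$ survives all $v(F)$ selections.
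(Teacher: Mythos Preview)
The paper does not prove this lemma; it is quoted as a classical result of Frankl--R\"odl and Sidorenko, with no argument given. So there is no in-paper proof to compare against, and I assess your attempt on its own.

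Your lower bound via blow-ups is correct and is the standard argument.

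Your upper bound, however, has a genuine gap: the claim that the reduced $r$-graph $G^*$ is $F$-hom-free is false, and the greedy/probabilistic embedding you sketch cannot be rescued using density alone. A concrete counterexample already at $r=2$: take $F=C_5$ and $H=K_{\lfloor n/2\rfloor,\lceil n/2\rceil}$, which is bipartite and hence $C_5$-free. For any equitable partition of $V(H)$ into $k\ge 3$ parts and any threshold $\eta<1/2$, every pair of parts has edge density about $1/2$, so $G^*=K_k$. But $C_5$ maps homomorphically into $K_k$, so $G^*$ is \emph{not} $C_5$-hom-free, while $H$ contains no $C_5$ at all; your embedding step is therefore impossible here. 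The underlying issue is that positive density between prescribed parts does not furnish the quasirandomness needed for a counting/embedding lemma---for that one needs (hyper)graph regularity, not just a density threshold.

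The classical route to the upper bound avoids this entirely and proceeds via supersaturation. One first checks the easy identity $r!\lambda^*=\pi_{\brm{hom}}(F):=\lim_n \brm{ex}_{\brm{hom}}(n,F)/\binom{n}{r}$, using that $\brm{Forb}_{\brm{hom}}(F)$ is closed under blow-ups together with the uniform-weight bound $\lambda(G)\ge e(G)/v(G)^r$. It then remains to show $\pi(F)\le\pi_{\brm{hom}}(F)$. Writing $\mathcal{F}$ for the finite family of homomorphic images of $F$, one has $\brm{Forb}_{\brm{hom}}(F)=\brm{Forb}(\mathcal{F})$, and Erd\H{o}s--Simonovits supersaturation gives $\pi(\mathcal{F})=\pi(\{F'(t):F'\in\mathcal{F}\})$ for $t=v(F)$. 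Since $F\to F'$ implies $F\subseteq F'(t)$, every $F$-free $r$-graph is $\{F'(t)\}$-free, whence $\pi(F)\le\pi(\{F'(t)\})=\pi(\mathcal{F})=\pi_{\brm{hom}}(F)=r!\lambda^*$. If you want to keep a ``reduced graph'' flavour, you must replace the density threshold by a regularity partition and invoke the corresponding counting lemma; the bare density argument does not suffice.
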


One can further restrict the search of the Tur\'an density of an $r$-graph to the Lagrangian of the family of those $F$-hom-free graphs, which are also \emph{dense}, where we say an $r$-graph $H$ is dense, if for any proper subgraph $H'$, $\lambda(H')<\lambda(H)$. We say that an $r$-graph $ {H}$ is \emph{intersecting} if $e \cap f \neq  \emptyset$ for all $E,F \in  {H}$. The connection between Tur\'an density of  $ {K}_{r,r}^{(r)}$ and the Lagrangians of intersecting $r$-graphs is established in the following lemma, a version of  which for \(r=3\) is present in~\cite{hefetzkeevash}, Theorem 4.1. 
\begin{lem}\label{lem:turandenskrr} For all \(r\geq 3\), \(\pi\left(K_{r,r}^{(r)}\right) = r!\sup_{H \in \mc{H}}\lambda(H)\), where \(\mathcal{H}\) is the family of all intersecting \(r\)-graphs.
\end{lem}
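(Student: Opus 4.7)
My plan is to deduce this lemma from Lemma~\ref{lem:hom} by showing that
\[\sup_{G\in\brm{Forb}_{\brm{hom}}(K_{r,r}^{(r)})}\lambda(G)=\sup_{H\in\mc{H}}\lambda(H).\]
For the inequality ``$\ge$'', I would check that every intersecting $r$-graph $H$ is $K_{r,r}^{(r)}$-hom-free. Writing $e_1,e_2$ for the two disjoint core edges of $K_{r,r}^{(r)}$, suppose $\varphi:V(K_{r,r}^{(r)})\to V(H)$ were a homomorphism. Then $\varphi(e_1)$ and $\varphi(e_2)$ are edges of $H$ and hence share a vertex, so $\varphi(u)=\varphi(v)$ for some $u\in e_1$ and $v\in e_2$. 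Since $\{u,v\}$ is uncovered by $\{e_1,e_2\}$, it is contained in some extension edge $f$ of $K_{r,r}^{(r)}$; the image $\varphi(f)$ then has at most $r-1$ distinct vertices and cannot be an edge of $H$, a contradiction.

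For the reverse inequality I would invoke the Frankl--R\"odl--Sidorenko fact recalled just before the lemma: every $r$-graph $G$ admits a subgraph $G^*$ that covers pairs and satisfies $\lambda(G^*)=\lambda(G)$. Since every subgraph of a $K_{r,r}^{(r)}$-hom-free $r$-graph is itself $K_{r,r}^{(r)}$-hom-free, it suffices to prove that any pair-covering, $K_{r,r}^{(r)}$-hom-free $r$-graph $G^*$ must be intersecting. Suppose for contradiction that $G^*$ contains two disjoint edges $e_1=\{u_1,\dots,u_r\}$ and $e_2=\{v_1,\dots,v_r\}$. I would construct a homomorphism $\varphi:V(K_{r,r}^{(r)})\to V(G^*)$ as follows: send the core edges $\{a_1,\dots,a_r\}$ and $\{b_1,\dots,b_r\}$ of $K_{r,r}^{(r)}$ via $a_i\mapsto u_i$ and $b_j\mapsto v_j$; and for each uncovered cross-pair $\{a_i,b_j\}$, whose corresponding extension edge has the form $\{a_i,b_j\}\cup S_{ij}$ with $|S_{ij}|=r-2$ fresh vertices, use the pair-covering property of $G^*$ to pick an edge $g_{ij}\in G^*$ containing $\{u_i,v_j\}$ and map $S_{ij}$ bijectively onto $g_{ij}\setminus\{u_i,v_j\}$. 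Because the sets $S_{ij}$ are mutually disjoint and disjoint from the cores, $\varphi$ is well defined, and by construction it sends every edge of $K_{r,r}^{(r)}$ to an edge of $G^*$, contradicting $K_{r,r}^{(r)}$-hom-freeness.

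I do not expect a substantial obstacle here. The argument is essentially a translation: the extension of two disjoint edges is designed precisely so that a pair-covering $r$-graph with two disjoint edges admits a homomorphism from $K_{r,r}^{(r)}$. The only convention that needs pinning down is that an edge-preserving map $\varphi$ must be injective on each edge (so that $\varphi(f)$ is a genuine $r$-element edge of the target); this is implicit in the paper's definition ``$\varphi(f)\in G$'', and it is exactly what makes the ``intersecting $\Rightarrow$ hom-free'' step work.
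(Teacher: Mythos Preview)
Your proposal is correct and follows essentially the same route as the paper's proof: both reduce via Lemma~\ref{lem:hom} to the class of $K_{r,r}^{(r)}$-hom-free $r$-graphs, observe that intersecting implies hom-free, and for the reverse inequality pass to a pair-covering (equivalently, dense) subgraph and argue that two disjoint edges together with the covering property yield a homomorphic copy of $K_{r,r}^{(r)}$. Your write-up is simply more explicit about the construction of the homomorphism and about why the ``intersecting $\Rightarrow$ hom-free'' direction holds.
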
 
\begin{proof}
By Lemma~\ref{lem:hom}, \(\pi\left(K_{r,r}^{(r)}\right) = r!\sup_{\mc{H}}\lambda({H})\), over all dense $K_{r,r}^{(r)}$-hom-free \(r\)-graphs \({H}\). It is not hard to see that every intersecting \(r\)-graph is \(K_{r,r}^{(r)}\)-hom-free. For the other direction, suppose \({H}\) is a dense $K_{r,r}^{(r)}$-hom-free \(r\)-graph. Suppose there are two disjoint edges \(f_1\) and \(f_2\) in \({H}\). Since $H$ is dense, for every pair of vertices $v_1\in f_1$ and $v_2\in f_2$, there exists an edge of $H$ covering them, thus creating a homomorphic copy of $K_{r,r}^{(r)}$, a contradiction.
\end{proof}

Thus to determine $ex(n,K_{r,r}^r)$ asymptotically, as we do it in Theorem~\ref{mainresult}, one is required to find the supremum of Lagrangians of intersecting $r$-graphs. And, indeed, a key ingredient of the proof of Theorem~\ref{hefetzkeevash}, Hefetz and Keevash show that the maximum Lagrangian of intersecting $3$-graphs is uniquely achieved by  \( {K}_5^{(3)}\). However, as noted in~\cite{hefetzkeevash}, for  $r \geq 4$ the analogous result does not hold: The maximum Lagrangian of an intersecting \(r\)-graph is not obtained by the complete $r$-graph \( {K}_{2r-1}^{(r)}\) on $2r-1$ vertices. Let $S_1^{(r)}(n)$ denote the intersecting $r$-graph on $n$ vertices consisting of all edges containing some fixed vertex $v$. A direct calculation shows that  $\lambda( {K}_{2r-1}^{(r)}) =\frac{1}{r^r}\binom{2r-1}{r}$, while $$\lim_{n \to \infty} \lambda(S_1^{(r)}(n)) = \frac{1}{r!}(1-r)^{r-1},$$ and the second expression is larger for $r \geq 4$. We show that for $r \geq 4$ the $r$-graphs $S_1^{(r)}(n)$ asymptotically achieve the supremum of Lagrangians of intersecting $r$-graphs. In fact, in the proof of Theorem~\ref{mainresult} we need a slightly stronger result. We say that an $r$-graph $ {H}$ is \emph{principal} if there exists $v \in V( {H})$ such that $v \in e$ for every $e \in  {H}$.

\begin{thm}\label{maxLagrangian} For every \(r\geq 4\) there exists a constant \(c_{r}\) such that  if \( {H}\) is an intersecting, but not  principal \(r\)-graph,  then \(\lambda( {H}) < \frac{1}{r!}\left(\left(1-\frac{1}{r}\right)^{r-1} - c_{r}\right).\)
\end{thm}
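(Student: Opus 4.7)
My plan is to argue by contradiction via compactness, combined with a direct analysis of the Lagrange optimality conditions at the extremal object. Throughout, for an $r$-graph $H$ with optimal distribution $p$, I use the Lagrange condition $g(v) := \sum_{e\ni v}\prod_{u\in e\setminus\{v\}}p(u) = r\lambda(H)$ for every $v\in\operatorname{supp}(p)$. The key preliminary observation is: for any intersecting $H$ with optimal $p$ and any $v\in\operatorname{supp}(p)$, splitting $\lambda(H) = p(v)g(v) + R_v$ where $R_v := \sum_{e\not\ni v}\prod_{u\in e}p(u)$ and using $g(v) = r\lambda(H)$ yields $R_v = \lambda(H)(1 - rp(v))$. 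Hence $p(v) \leq 1/r$, with equality iff every edge of $H$ contains $v$. In particular, if $H$ is intersecting and non-principal, then every support vertex has $p(v) < 1/r$ strictly, and $R_v > 0$ for each such $v$.

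Now suppose the theorem fails; take a sequence of intersecting non-principal $r$-graphs $H_n$ with $\lambda(H_n) \to L := \tfrac{1}{r!}(1-1/r)^{r-1}$ and optimal distributions $p_n$ supported on $V(H_n)$. I would first establish a bounded-support lemma: there is an $M_r$ such that, after restricting to the heavy part of the support and losing only $o(1)$ in $\lambda$, one may assume $|V(H_n)| \leq M_r$. Granted this, pass to a subsequence so that $H_n$ equals a fixed intersecting non-principal $r$-graph $H^*$ and $p_n \to p^*$, giving $\lambda(H^*) = L$ attained at $p^*$. Let $v_0$ be a vertex of maximum weight $x := p^*(v_0)$, so by the preliminary observation $x < 1/r$, and let $f^* \in H^*$ be an edge with $v_0 \notin f^*$ (guaranteed by non-principality). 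Applying the Lagrange condition to every $w \in f^*$ and noting that each edge of $H^*$ meets $f^*$ gives $\sum_{w\in f^*} p^*(w) \geq 1/r$ as an additional constraint.

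To reach a contradiction, I examine the link $L(v_0)$, an $(r-1)$-graph on $V(H^*)\setminus\{v_0\}$ whose every edge meets $f^*$ (otherwise such an edge together with $f^*$ would violate intersecting). Renormalising the distribution on $V(H^*)\setminus\{v_0\}$ by $1-x$, the Lagrangian of $L(v_0)$ at this distribution equals $g(v_0)/(1-x)^{r-1} = rL/(1-x)^{r-1}$. A quantitative bound on the Lagrangian of any $(r-1)$-graph whose edges all meet a fixed $r$-set --- itself proved by iterating the Lagrange identity on the cover $f^*$ --- gives a strict improvement over the naive $1/(r-1)!$; this forces $rL/(1-x)^{r-1}$ to be strictly less than $1/(r-1)!$, which combined with $x < 1/r$ contradicts $\lambda(H^*) = L$.

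The main obstacle is the bounded-support reduction, because the principal extremisers $S_1^{(r)}(n)$ have optimal distributions spread over arbitrarily many vertices, so the bound $M_r$ must genuinely exploit the non-principal assumption. I envisage a weighted Hilton--Milner-type estimate: in an intersecting $r$-graph admitting an edge $f^*$ avoided by some vertex $v_0$, most of the Lagrangian mass concentrates on $\{v_0\}\cup f^*$, while the remaining weights contribute only an $\varepsilon$-level perturbation to $\lambda(H)$. The strict inequality at the end also uses $r \geq 4$ crucially: for $r = 3$ the complete $3$-graph $K_5^{(3)}$ (intersecting, non-principal) already exceeds $L$, so the inequality chain necessarily fails there; only for $r \geq 4$ does the extra exponent in $(1-1/r)^{r-1}$ provide the necessary slack between the principal and the non-principal extremum.
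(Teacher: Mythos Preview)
Your approach via Lagrange optimality conditions and compactness is quite different from the paper's, which instead introduces weighted intersecting set systems (a compactified object that absorbs the ``spread-out'' mass into a single atom $p(\infty)$), applies left-compression to reach a target system, proves directly that any target has at most $2r-1$ vertices, and then finishes by an explicit case analysis. The paper's route sidesteps the bounded-support issue by design, whereas in your scheme it is the acknowledged ``main obstacle'' and is left as a hoped-for Hilton--Milner-type estimate.

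Even granting the bounded-support reduction, your final contradiction step does not close. You obtain
\[
\frac{rL}{(1-x)^{r-1}} \;\leq\; \lambda\bigl(L(v_0)\bigr) \;<\; \frac{1}{(r-1)!},
\]
and then say that ``combined with $x < 1/r$'' this contradicts $\lambda(H^*)=L$. But unwinding the displayed inequality with $L = (1-1/r)^{r-1}/r!$ gives precisely $(1-1/r)^{r-1} < (1-x)^{r-1}$, i.e.\ $x < 1/r$, which is what you already knew --- there is no contradiction. To make this work you would need a matching \emph{lower} bound on $x$, and the only one your argument provides is $x \geq 1/r^{2}$ (from $\sum_{w\in f^*}p^*(w)\geq 1/r$ and $x$ maximal), which is far too weak. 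Concretely, for $r=4$ the link $L(v_0)$ can be any $3$-graph whose edges all meet a fixed $4$-set; the supremum of such Lagrangians is roughly $0.117$, so your inequality only forces $(1-x)^{3} > 0.60$, i.e.\ $x < 0.156$, and this is perfectly compatible with $x\in(1/16,1/4)$ and $\lambda(H^*)=L$. A genuine quantitative two-sided control of $x$ (or a different endgame) is needed, and nothing in the sketch supplies it.

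There is also a subtler issue with the preliminary observation: non-principality of $H^*$ does not immediately give $p^*(v)<1/r$ for every support vertex, because the edge avoiding $v$ might have zero product weight. One must first rule out the case that $H^*$ restricted to $\operatorname{supp}(p^*)$ is principal; this is easy once bounded support is in hand (a principal $r$-graph on finitely many vertices has Lagrangian strictly below $L$), but it should be said explicitly.
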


Theorem~\ref{maxLagrangian} can be considered as a weighted version of the classical Erd\H{o}s-Ko-Rado theorem which states that for $n \geq 2r+1$ the intersecting $r$-graph on $n$ vertices with the maximum number of edges is principal. Theorem~\ref{maxLagrangian} implies that for sufficiently large $n$ the maximum measure of an intersecting $r$-graph under a non-uniform product measure is achieved by a principal $r$-graph. Let us note that Friedgut~\cite{friedgut} proved a weighted result for $t$-intersecting\footnote{a set system $G$ is \emph{$t$-intersecting} if $|e \cap f| \geq t$ for all $e,f \in G$} set systems using Fourier analytic methods, but, as we consider set systems consisting only of the sets of size $r$, there appears to be no direct  way to derive Theorem~\ref{maxLagrangian} from the results of~\cite{erdos} and vice versa.
The proof of Theorem~\ref{maxLagrangian} relies primarily on the compression techniques, in particular on  the tools  developed by Ahlswede and Khachatrian in their proof of the Complete Intersection Theorem~\cite{complete}. 

We prove Theorem~\ref{maxLagrangian} in Section~\ref{Lagrangian}.
In Section~\ref{stability} we derive Theorem~\ref{mainresult} from Theorem~\ref{maxLagrangian}. As mentioned earlier, our proof relies on the stability method. In Section~\ref{sec:prelim} we use  the tools developed in~\cite{extensions,gentriangle} to reduce the class of $r$-graphs which need to be considered in the proof of Theorem~\ref{mainresult} to \( {K}_{r,r}^{(r)}\)-free $r$-graphs that are close to $S^{(r)}(n)$ in the edit distance and are nearly regular. In Section~\ref{vertexlocalstability} we prove the upper bound on the number of edges for these $r$-graphs.

\subsection{Notation} 

Our notation is fairly standard. Let
 $[n]=\{1,2,\ldots,n\}$. Let $2^{X}$ denote the set of all subsets of set $X$, and let $X^{(k)}$ denote the set of all $k$-element subsets. 
 For an $r$-graph $ {F}$ and $v \in V( {F})$,  the \emph{link of the vertex \(v\)} is defined as $$ L_{ {F}}(v):=\{I \in (V( {F}))^{(r-1)} \: | \: I \cup \{v\} \in  {F} \}.$$  More generally, for  \(I\subseteq V( {F})\) the \emph{link $L_{ {F}}(I)$  of \(I\)} is defined as 
 $$L_{ {F}}(I) := \{J\subseteq V( {F}) \: | \:  J \cap I = \emptyset, I \cup J\in  {F}\}.$$
 We skip the index $ {F}$, whenever $ {F}$ is understood from the context. 
 
For an \(r\)-graph \( {F}\) and a subset \(A\subseteq V( {F})\) we define \( {F}[A]\) to be the \(r\)-graph induced by \(A\), that is, an \(r\)-graph on the vertex set \(A\) and all the edges of \( {F}\) which contain only the vertices of \(A\). 

Given a family $\mathcal{F}$ of $r$-graphs define $$\lambda(\mathcal{F})=\sup_{ {F} \in \mathcal{F}}\lambda( {F}).$$ 

In the next section we will not only consider $r$-graphs, but more general set systems.  Extending the hypergraph notation we say that a set system $G$ is \emph{intersecting} if $e \cap f \neq \emptyset$ for all $e,f \in G$. We say that a set system $G$ is an \emph{($\leq r$)-graph} if $|e| \leq r$ for every $e \in G$. 

\section{Maximum Lagrangian of Intersecting \(r\)-graphs}
\label{Lagrangian}

In this section we prove Theorem~\ref{maxLagrangian}.

For a positive integer $s$, let $[s]^{+}=[s] \cup \{\infty\}$. 
Our central object of study will be a \emph{weighted intersecting set system} or \emph{w.i.s.s.} for short, which is a triple  $(G,s,p)$, where \begin{itemize}
	\item $s$ is a positive integer,
	\item $G \subseteq 2^{[s]}$ is an intersecting $(\leq r)$-graph,
	\item $p: [s]^{+} \to [0,1]$ is a probability distribution\footnote{That is $p(\infty)=1-\sum_{i=1}^{s}p(i)$.}, which is non-increasing on $[s]$.
\end{itemize} 
It will be convenient for us to write $p_{\infty}$ instead of $p(\infty)$. 
For $e \subseteq 2^{[s]}$ such that $|e| \leq r$  and a probability distribution  $p: [s]^{+} \to [0,1]$, define the \emph{weight $w_p(e)$ of $e$} by  $$w_p(e)=\frac{r!}{(r-|e|)!}p_{\infty}^{r-|e|}\prod_{i \in e}p(i).$$
We frequently use probabilistic intuition to estimate $w_p(e)$. Let $\mbf{S}^r_p$ be a multiset of $r$ elements drawn from $[s]^{+}$ independently at random according to the probability distribution $p$. Then $w_p(e)$ is the probability that the restriction of $\mbf{S}^r_p$ to $[s]$ is equal to $e$, and, in particular, has no repeated elements. For an ($\leq r$)-graph $G \subseteq 2^{[s]}$, we define $w_p(G)=\sum_{e \in G}w_p(e)$. 
Thus $w_p(G)$ is the probability that the restriction of $\mbf{S}^r_p$ to $[s]$ is equal to an edge of $G$.

Let us further motivate the technical definition of a weighted intersecting set system above. Let $G$ be an intersecting $r$-graph, and let $S \subseteq V(G)$ be such that $e \cap f  \cap S \neq \emptyset$ for all $e,f \in G$.
Let $G \downarrow S = \{e \cap S | e \in G\}$. Then $G \downarrow S$ is an intersecting $(\leq r)$-graph. Moreover, if $G$ is a maximal intersecting  $r$-graph on $V(G)$ then $G = \{e \in V(G)^{(r)} | e \cap S \in G \downarrow S \}$.
Thus $G \downarrow S$ contains all the essential information about $G$.
 The  $(\leq r)$-graph $G \downarrow S$ is referred to as a \emph{generating set} of $G$ in~\cite{complete}, and in fact our definition of  a weighted intersecting set system is motivated by the definition of generating sets in~\cite{complete}.

 Consider now a probability distribution $p$ on $V(G)$  such that $\lambda(G)= \sum_{e\in G}{\prod_{v\in G}{p(v)}}$. Note that $r!\lambda(G)$ is the probability that a multiset of $r$ elements drawn from $V(G)$ independently at random according to the probability distribution $p$ produces an edge of $G$.  We assume without loss of generality that $S = [s]$ for some positive integer, and that $p$  is non-increasing on $[s]$.\footnote{The condition that $p$ is non-increasing on $[s]$ might appear artificial at the moment, but is a natural requirement in the compression arguments.} Define a probabilistic distribution  $p': [s]^{+} \to [0,1]$ by setting $p'(i)=p(i)$ for $i \in S$, and $p(\infty)=1-\sum_{i=1}^{s}p(i)$. Then $(G \downarrow [s] ,s,p')$ is a w.i.s.s. and $w_{p'}(G \downarrow [s] )$ can be interpreted as the probability that a restriction of the multiset of $r$ elements drawn from $V(G)$ independently at random according to the probability distribution $p$ to $[s]$ produces an edge of $G \downarrow S$.  It follows that $w_{p'}(G \downarrow S) \geq r!\lambda(G)$. If $G \downarrow [s]$ contains an edge of size $\leq r-2$ then the equality is necessarily strict, but  one can construct a sequence of intersecting $r$-graphs $\{G_i\}_{i \in \bb{N}}$ such that $G_i\downarrow [s] = G \downarrow [s]$ and $\lim_{i \to \infty} r!\lambda(G_i)=w_{p'}(G \downarrow [s])$, by increasing the number of vertices of $G$ and reducing the probability of individual vertices in $V(G) - [s]$. As we want to upper bound the maximum Lagrangians of intersecting $r$-graphs, such sequences of increasingly large $r$-graphs with increasing Lagrangians could present a major technical difficulty, as one can not naturally choose an ``optimal" object in them.  Fortunately restricting our attention to weighted intersecting set systems avoids the issue.

Abusing the notation slightly we will say that the $(\leq r)$-graph $G$ is \emph{principal} if $1 \in e$ for all $e \in G$, and \emph{non-principal}, otherwise.\footnote{Note that this definition is slightly different from the definition of principal $r$-graphs given in the introduction.}  We say that a w.i.s.s. $(G,s,p)$ is \emph{(non)-principal} if $G$ is (non)-principal. Let $L_r =\left(1-\frac{1}{r}\right)^{r-1}$. Note that $L_r \geq 1/e$ for every integer $r \geq 2$. For  a principal w.i.s.s. $(G,s,p)$ we have \begin{equation}
\label{e:principal}
w_p(G) \leq rp(1)(1-p(1))^{r-1} \leq L_r. 
\end{equation}

The following is the main technical result of this section, which directly implies Theorem~\ref{maxLagrangian}.

\begin{thm}\label{t:wiss}
	For every integer $r \geq 4$ there exists $c_r>0$ satisfying the following. Let $(G,s,p)$ be a non-principal w.i.s.s. Then $$w_p(G) \leq L_r - c_{r}.$$ 	
\end{thm}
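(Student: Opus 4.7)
The plan is to adapt Ahlswede and Khachatrian's compression technique to the weighted intersecting set system setting, proceeding by induction on $s$.

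First, I would apply the left-shift operations $S_{ji}$ with $2 \le j < i \le s$: for each $e \in G$ satisfying $i \in e$, $j \notin e$, and $(e \setminus \{i\}) \cup \{j\} \notin G$, replace $e$ by $(e \setminus \{i\}) \cup \{j\}$. Each such shift preserves the intersecting property of $G$ and, since $p$ is non-increasing on $[s]$, is weight-non-decreasing for $w_p$. Crucially, by restricting to $j \ge 2$ I avoid absorbing any edge of $G_0 := \{e \in G : 1 \notin e\}$ into $G_1 := \{e \in G : 1 \in e\}$, so $G$ remains non-principal throughout. After finitely many iterations, I may assume that $G$ is left-compressed on $[s] \setminus \{1\}$.

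Next, let $e^* \in G_0$ be an edge of minimum size $k$; note that $k \ge 2$, since a singleton in $G$ would force principality by the intersecting property. Applying the compression recursively to each element of $e^*$, I may further assume $e^* = \{2, 3, \ldots, k+1\}$. Write $T := e^*$, $\alpha := p(1)$, and $q := p(T)$; by the intersecting property every edge of $G$ meets $T$. Using the probabilistic interpretation $w_p(H) = \Pr[\mbf{S}^r_p \cap [s] \in H]$ and relaxing the distinctness constraint on the samples, I obtain
\[ w_p(G_1) \le r\alpha\bigl[(1-\alpha)^{r-1} - (1-\alpha-q)^{r-1}\bigr], \]
since each $e \in G_1$ contains vertex $1$ (sampled exactly once) and meets $T$ via one of the remaining samples. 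For $w_p(G_0)$, conditioning on the event that no sample equals $1$ (probability $(1-\alpha)^r$) and renormalizing by $p'(i) := p(i)/(1-\alpha)$ on $[s] \setminus \{1\}$, I obtain $w_p(G_0) = (1-\alpha)^r\, w_{p'}(G_0)$, where $(G_0, s-1, p')$ is again a w.i.s.s.\ after relabeling.

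The induction now splits according to whether $G_0$ is principal on $[s] \setminus \{1\}$: in the principal case (WLOG on vertex $2$ by left-compression), the bound \eqref{e:principal} gives $w_{p'}(G_0) \le L_r$; in the non-principal case, the inductive hypothesis yields $w_{p'}(G_0) \le L_r - c_r$. Summing with the bound on $w_p(G_1)$ and optimizing over $\alpha$ and $q$ gives the desired $w_p(G) \le L_r - c_r'$ for some $c_r' > 0$, provided $r \ge 4$. The main obstacle is verifying this combined bound uniformly over $(\alpha, q) \in [0,1]^2$: near the extremal principal point $(\alpha, q) = (1/r, 0^+)$ the loss term $r\alpha(1-\alpha-q)^{r-1}$ in the $G_1$ estimate scales linearly in $q$ and should dominate the gain $(1-\alpha)^r L_r$ from $G_0$, but making this quantitative requires a careful pointwise calculus argument exploiting the non-increasing constraint $p(2) \le \alpha$. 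This is exactly where the hypothesis $r \ge 4$ is needed, since for $r = 3$ the analogous inequality fails (the $3$-graph $K_5^{(3)}$ beats any principal family in Lagrangian).
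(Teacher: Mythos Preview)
Your inductive scheme has a genuine gap: the two upper bounds you produce for $w_p(G_1)$ and $w_p(G_0)$ are individually valid, but their \emph{sum} is not controlled below $L_r$, so no choice of $c_r>0$ closes the induction. Concretely, take $\alpha=1/r$ and let $q$ approach $1-\alpha$. Then your bound on $w_p(G_1)$ tends to $r\alpha(1-\alpha)^{r-1}=L_r$, while your bound on $w_p(G_0)$ contributes an additional positive term $(1-\alpha)^r(L_r-c_r)$ in the non-principal case (or $(1-\alpha)^rL_r$ in the principal case); the total exceeds $L_r$. Even in the most favorable sub-case --- $G_0$ principal on vertex $2$, $k=1$, so $q=\beta=p(2)\le\alpha$ --- the refined estimate $w_p(G_0)\le r\beta(1-\alpha-\beta)^{r-1}$ combines with your $G_1$ bound to give exactly $L_r$ at $\alpha=\beta=1/r$, with no slack for $c_r$. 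So the ``careful pointwise calculus'' you defer cannot succeed: the inequalities are too weak, not merely unverified. (Incidentally, the claim $k\ge2$ is also false: a singleton $\{2\}\in G_0$ forces every edge to contain $2$, not $1$, so $G$ remains non-principal in the paper's sense.)

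The paper organizes the induction differently, and the difference is essential. Rather than conditioning away vertex $1$, it deletes the \emph{last} vertex $s$. Two structural facts make this work. First, an extremal w.i.s.s.\ (a ``target'') must satisfy $s\le 2r-1$ (Lemma~\ref{l:prelimbound}), so only boundedly many inductive steps occur and a geometrically decaying constant $c\,2^{-\min(2r-2,s-1)}$ suffices. Second, the edges of $G$ whose intersection with some other edge is exactly $\{s\}$ pair up, and the pairing splits $G$ into two intersecting $(\le r)$-graphs $G_1,G_2$ on $[s-1]$; a weight-doubling inequality $w_{\bar p}(e\setminus\{s\})\ge 2w_p(e)$ (Lemma~\ref{l:contribution}, valid when $p_\infty\ge p(s)$) then yields $w_p(G)\le\tfrac12\bigl(w_{\bar p}(G_1)+w_{\bar p}(G_2)\bigr)$, so each step loses at most a factor of two in the gap from $L_r$. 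The remaining regimes ($s=2$, or $p_\infty<p(s)$) are handled by direct calculation, which is where the condition $r\ge4$ actually enters.
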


First let us derive Theorem~\ref{maxLagrangian} from Theorem~\ref{t:wiss}.

\begin{proof}[Proof of Theorem~\ref{maxLagrangian} assuming  Theorem~\ref{t:wiss}.]
	Let $G$ be an intersecting $r$-graph, which is not principal. We may assume	$G \subseteq 2^{[s]}$ for some positive integer $s$. By definition of the Lagrangian there exists a probability distribution $p: [s]^{+} \to [0,1]$ with $p(\infty)=0$ such that $w_p(G)=r!\lambda(G).$ By permuting vertices of $G$ one may further assume that $p$  is non-increasing on $[s]$, implying that  $(G,s,p)$ is a w.i.s.s. By Theorem~\ref{t:wiss} we have
	$$\lambda(G) \leq \frac{1}{r!} w_p(G) \leq \frac{1}{r!}\left(  \left(1-\frac{1}{r}\right)^{r-1} - c_{r}\right),$$
	as desired.
\end{proof}

It remains to proof Theorem~\ref{t:wiss}. The proof occupies the remainder of this section.

We say that a non-principal w.i.s.s.  $(G,s,p)$ is a \emph{target} if $w_p(G) \geq w_{p'}(G')$ for every  non-principal w.i.s.s. $(G',s',p')$ such that $s' \leq s$. Moreover, if the equality holds then $s=s'$, $p(s)>0$ and $\sum_{e \in G}\sum_{i \in e} i \leq \sum_{e' \in G'}\sum_{i \in e'} i$. Clearly, it suffices to prove Theorem~\ref{t:wiss} for targets. 

We use the compression technique to show that targets are very structured. Given a $(\leq r)$-graph $G \subseteq 2^{[s]}$ and $1 \leq i < j \leq s$ we define a \emph{compression map} $R_{ij}: G \to  2^{[s]}$, by setting $$R_{ij}(e)= e \setminus \{j\} \cup \{i\},$$ if $j \in e$, $i \not \in e$ and  $e \setminus \{j\} \cup \{i\} \not \in G$, and  $R_{ij}(e)= e$, otherwise. Then $R_{ij}$ is an injection. It is well known (see~\cite[Proposition 2.1]{franklsurvey}) that if $G$ is intersecting, then so is $R_{ij}(G)$. The next lemma shows that targets are essentially always ``compressed".

\begin{lem}\label{l:compress}
	Let $(G,s,p)$ be a target. Then either \begin{itemize}
		\item either $e \setminus \{j\} \cup \{i\} \in G$ for every $e \in G$ and $1 \leq i < j \leq s$ such that $j \in e$, $i \not \in e$, 
		\item or $2 \in e$ for every $e \in G$.
	\end{itemize}
\end{lem}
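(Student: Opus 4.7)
The plan is to exploit the three maximality properties defining a target: that $w_p(G)$ is maximum among non-principal w.i.s.s.\ with $s'\le s$, with ties broken by $s'=s$ and then by minimality of $\sum_{e}\sum_{k\in e} k$. First I would argue that the compression $R_{ij}$ is a no-op whenever $2\le i<j\le s$. For such $i$, the shift never alters membership of vertex $1$, so $R_{ij}(G)$ is an intersecting non-principal $(\le r)$-graph on $[s]$. The non-increasing property of $p$ gives $w_p(R_{ij}(G))\ge w_p(G)$, and the target inequality then forces equality. Since compression weakly decreases $\sum_e\sum_{k\in e} k$, the tie-breaking forces $R_{ij}(G)=G$, which is precisely the first bullet for every pair with $2\le i<j\le s$.

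Next, for $j\ge 2$ I would consider $R_{1j}(G)$. If it is non-principal the same argument gives that the shift $(1,j)$ is a no-op; if this happens for every $j\ge 2$, the first alternative of the lemma is established. Otherwise there is some $j$ for which $R_{1j}(G)$ is principal, which forces every $e\in G$ with $1\notin e$ to contain $j$ and to satisfy $e\setminus\{j\}\cup\{1\}\notin G$. In this regime I would aim to deduce the second alternative. The first step would be to observe that every edge $e\in G$ with $1\notin e$ contains $2$: this is immediate if $j=2$; and when $j>2$ one combines the principality condition with the already-established no-op $(2,j)$-shift, since any $e\in G$ lacking both $1$ and $2$ but containing $j$ would yield $e\setminus\{j\}\cup\{2\}\in G$, a new edge without $1$ now lacking $j$, contradicting principality.

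It then remains to show every edge containing $1$ also contains $2$. Suppose for contradiction some $e^*\in G$ has $1\in e^*$ and $2\notin e^*$, and set $e^{\text{sw}}=(e^*\setminus\{1\})\cup\{2\}$. The principality hypothesis together with the no-op $(2,k)$-shift for $k\in e^*\setminus\{1\}$ would imply $e^{\text{sw}}\notin G$ (the two conditions give contradictory membership of $(e^*\setminus\{j\})\cup\{2\}$ when $j\in e^*$, with the $j=2$ case reducing to identifying the swap's un-shift with $e^*$ itself). I would then adjoin $e^{\text{sw}}$ to $G$ to produce a non-principal w.i.s.s.\ of strictly larger weight, contradicting the target property. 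The main obstacle will be intersecting-ness: because every edge without $1$ already contains $2$, the sole potential obstruction is an edge $f\in G$ with $1\in f$, $2\notin f$, and $e^*\cap f=\{1\}$. Handling this case is the delicate core of the argument; I would perform a parallel swap of $e^*$ and all such ``bad'' $f$ simultaneously into their $2$-versions, and invoke the tie-breaking on $\sum_e\sum_{k\in e} k$ together with the strict sign of the weight change $(p(1)-p(2))$ to rule out the remaining equality cases, ultimately contradicting one of the three target conditions.
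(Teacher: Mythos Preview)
Your argument agrees with the paper's through the point where you show that every edge not containing $1$ must contain $2$; the reasoning there is correct and essentially identical. The divergence, and the gap, is in the last step.

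Your plan for the case $1\in e^*$, $2\notin e^*$ is to adjoin $e^{\mathrm{sw}}=(e^*\setminus\{1\})\cup\{2\}$ and, when ``bad'' edges $f$ (with $1\in f$, $2\notin f$, $e^*\cap f=\{1\}$) obstruct intersecting-ness, to swap $e^*$ and all bad $f$ to their $2$-versions. This does not work: after the swap, an \emph{un}swapped edge $g$ with $1\in g$, $2\notin g$, and $g\subseteq e^*$ (so $g$ is not bad) satisfies $g\cap f^{\mathrm{sw}}=(g\cap f)\setminus\{1\}=\emptyset$, so the resulting system is not intersecting. Swapping \emph{all} edges with $1\in\cdot$, $2\notin\cdot$ does yield an intersecting non-principal system, but then each swapped edge loses weight proportional to $p(1)-p(2)\ge 0$ and gains $+1$ in the index sum, so neither the weight maximality nor the tie-breaking minimality is violated. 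The ``tie-breaking on $\sum_e\sum_k k$'' cannot rescue this: it points the wrong way. So the sketch, as stated, does not close.

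The paper avoids the adjunction idea entirely. It first records one fact you do not use: by weight-maximality, $G$ is an up-set (any superset of an edge, of size $\le r$, can be added while preserving intersecting-ness, so it must already be in $G$). From non-principality plus left-compression on $\{2,\dots,s\}$ and up-closure one gets $e''=\{2,\dots,\min(r+1,s)\}\in G$, and then from $R_{1j}(G)$ principal one extracts $j\in e''$ and $(e''\setminus\{j\})\cup\{1\}\notin G$. Now assume some $f\in G$ has $1\in f$, $2\notin f$; compressing and up-closing gives $f''=\{1,3,\dots,\min(r+1,s)\}\in G$. If $j=2$ then $f''=(e''\setminus\{j\})\cup\{1\}$, a contradiction; if $j\ge 3$ then the no-op $(2,j)$-shift applied to $f''$ yields $(e''\setminus\{j\})\cup\{1\}\in G$, again a contradiction. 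This is the missing idea: rather than modifying $G$, use up-closure to manufacture the specific edge whose presence contradicts the principality of $R_{1j}(G)$.
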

\begin{proof} Note that if $R_{ij}(G) = G$, for all   $1 \leq i < j \leq s$ then the first outcome of the lemma holds. Thus we suppose that $R_{ij}(G) \neq  G$ for some $1 \leq i < j \leq s$.
	Let $G'=R_{ij}(G)$. Then $(G',s,p)$ is a w.i.s.s. Moreover, $w_p(G') \geq w_{p}(G)$, as $p(i) \geq p(j)$, implying $w_p(R_{ij}(e)) \geq w_p(e)$ for every $e \in G$. Also,
	$\sum_{e \in G}\sum_{i \in e} i > \sum_{e' \in G'}\sum_{i \in e'} i$. Thus  $G'$ is principal, as $(G,s,p)$ is a target. In particular, this implies that $i=1$, i.e. $R_{i'j'}(G) = G$ for all $1 < i' < j' \leq s$.	 Note further that $e \cap \{1,j\} \neq \emptyset$ for every $e \in G$ as $R_{1j}(G)$ is principal.
	
	As $G$ is non-principal there exists $e \in G$ such that $1 \not \in e$, and it follows from the above that $e'=\{2,3,\ldots,|e|+1\} \in G$. This in turn implies that $e''=\{2,3,\ldots,\min(r+1,s)\} \in G$, as $e' \subseteq e''$, and if $e'' \not \in G$ then it could be added to $G$, violating the condition that $G$ is a target.
	We further must have $j \in e''$ and $(e''\setminus \{j\}) \cup \{1\} \not \in G$, as otherwise $e'' \in R_{1j}(G)$ violating the assumption that $R_{1j}(G)$ is principal. 
	
	Suppose now for a contradiction that $2 \not \in f$ for some $f \in G$.  If $1 \not \in f$, then $j \in f$ and $j \neq 2$. Let $f' = f \setminus  \{j\} \cup \{2\}$. Then $f' \in G$, as $R_{2j}(G)=G$, however $f' \cap\{1,j\} = \emptyset$, a contradiction. Thus $1 \in f$. It follows, as above, that $f''=\{1,3,\ldots,\min(r+1,s)\} \in G$. In this case however, we have $R_{1j}(e'')=e''$, contradicting $R_{1j}(G)$ being principal.
\end{proof}

Given a probability distribution $p$ on $[s]^{+}$, define a probability distribution $\bar{p}$ on $[s-1]^{+}$ by setting $\bar{p}(i)=p(i)$ for every $i \in [s-1]$ and $\bar{p}(\infty)={p}(\infty)+p(s)$. For a set system $H$ define $H-s=\{e\backslash\{s\}|e \in H \}$. 

\begin{lem} \label{l:prelimbound}
	Let $(G,s,p)$ be a target for some $s>2$. Then there exist $e, f \in G$ such that $e \cap f = \{s\}$, and we have $e \cup f = [s]$ for each such pair $e,f$. In particular, $s \leq 2r-1$.
\end{lem}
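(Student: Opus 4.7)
The statement has three conclusions: (i) the existence of $e, f \in G$ with $e \cap f = \{s\}$; (ii) the equality $e \cup f = [s]$ for every such pair; and (iii) the bound $s \leq 2r - 1$. The third conclusion follows immediately from the first two, since $s = |e \cup f| = |e| + |f| - |e \cap f| \leq 2r - 1$, so the real work is in (i) and (ii). My plan is to obtain (i) by deleting vertex $s$ and appealing to the target property, and to obtain (ii) from Lemma~\ref{l:compress} via a compression swap.

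For (i), set $G_s = \{e \in G : s \in e\}$ and let $L = \{e \setminus \{s\} : e \in G_s\}$ be the link of $s$. First I will observe that $G_s \neq \emptyset$: otherwise $G \subseteq 2^{[s-1]}$, and $(G, s-1, \bar p)$ is a non-principal w.i.s.s. with $w_{\bar p}(G) \geq w_p(G)$ (since $\bar p_\infty \geq p_\infty$), contradicting the target property either by strict inequality or by the equality clause, which forbids $s' < s$. Assuming (i) fails, the link $L$ must be an intersecting family in $[s-1]$ (any two distinct edges of $G_s$ share an element other than $s$). Form $G' := (G \setminus G_s) \cup L \subseteq 2^{[s-1]}$; a routine case analysis verifies that $G'$ is intersecting (the cross-case uses $s \notin e$ to force $e \cap (A \cup \{s\}) \subseteq A$) and non-principal (any non-principal witness in $G$ survives the map $e \mapsto e \setminus \{s\}$). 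The crux is the weight comparison $w_{\bar p}(G') \geq w_p(G)$, which I will check term by term. For $e \in G_s$ of size $k$, the bound $w_{\bar p}(e \setminus \{s\}) \geq w_p(e)$ reduces, with $x = p_\infty$, $y = p(s)$, $m = r - k$, to $(x+y)^{m+1} \geq (m+1) x^m y$, which is AM-GM on $m$ copies of $x/m$ together with $y$. When $e \mapsto e \setminus \{s\}$ is not injective --- i.e., some $e' \in G \setminus G_s$ coincides with $f \setminus \{s\}$ for $f \in G_s$ --- the required combined inequality $w_{\bar p}(e') \geq w_p(e') + w_p(f)$ follows instead from the binomial identity $(x+y)^m = x^{m-1}(x + my) + \text{(nonnegative terms)}$. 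Summing yields $w_{\bar p}(G') \geq w_p(G)$; since $s' = s - 1 < s$, this violates the target property, a contradiction. Conclusion (i) follows, as any two disjoint members $A, B \in L$ (which exist precisely because $L$ is not intersecting) give $e = A \cup \{s\}$ and $f = B \cup \{s\}$ meeting only in $\{s\}$.

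For (ii), suppose such $e, f$ are given and some $i \in [s] \setminus (e \cup f)$ exists; I will derive a contradiction using Lemma~\ref{l:compress}. The second alternative of that lemma --- every edge contains $2$ --- is incompatible with $e \cap f = \{s\}$ once $s > 2$, so $G$ must be fully compressed. Then $e' := (e \setminus \{s\}) \cup \{i\}$ lies in $G$ (since $i < s$, $s \in e$, $i \notin e$), and $e' \cap f = \emptyset$ (as $s$ was the only shared element of $e$ and $f$ and $i \notin f$), contradicting that $G$ is intersecting. The main obstacle I anticipate is the weight inequality in part (i): the possibility that an edge of $G \setminus G_s$ coincides with the shadow of some edge of $G_s$ threatens to produce overcounting in $w_{\bar p}(G')$, and the stronger binomial bound above is exactly what is needed to absorb both contributions into a single term. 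The equality clause of the target definition, forbidding $s' < s$, provides the needed margin in the borderline cases where AM-GM is tight.
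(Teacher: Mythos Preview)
Your argument is correct and follows essentially the same route as the paper's proof: for (i) you pass to $G' = G - s$ on $[s-1]$ with the modified distribution $\bar p$ and compare weights, and for (ii) you use Lemma~\ref{l:compress} to push $s$ down to an uncovered index $i$ and obtain a disjoint pair. The paper states the key weight inequality in the consolidated form $w_{\bar p}(e\setminus\{s\}) \geq w_p(e\setminus\{s\}) + w_p(e)$ (which is just two terms of the binomial expansion of $(x+y)^{m+1}$), whereas you split into the collision and non-collision cases; you are also more explicit than the paper about why $G'$ remains non-principal, but the substance is the same.
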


\begin{proof}
	Suppose for a contradiction that $(e \cap f) \setminus \{s\} \neq \emptyset$ for all $e,f \in G$.
	Let $G'=G-s$. Then $G'$ is intersecting, and $w_{\bar{p}}(e\setminus \{s\} ) \geq w_{p}(e\setminus \{s\} )+ w_{p}(e)$ for every $e \in G$, implying  $w_{\bar{p}}(G') \geq w_p(G)$. Thus w.i.s.s. $(G',\bar{p},s-1)$ contradicts the assumption that $(G,s,p)$ is a target. 
	
	Consider now  $e, f \in G$ such that $e \cap f = \{s\}$.
	Suppose for a contradiction that there exists $i \in [s]$ such that  $ i \not \in e \cup f$.  Then $e'=(e \setminus \{s\}) \cup \{i\} \in G$ by Lemma~\ref{l:compress}, as $2 \not \in e \cap f$.
	However, $e' \cap f =\emptyset$, yielding the desired contradiction.
\end{proof}

\begin{corollary}\label{c:compress}
	Let $(G,s,p)$ be a target for some $s>2$. Then $e \setminus \{j\} \cup \{i\}$ for every $e \in G$ and $1 \leq i < j \leq s$ such that $j \in e$, $i \not \in e$. 	
\end{corollary}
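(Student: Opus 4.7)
The plan is to derive the corollary as an immediate consequence of Lemma~\ref{l:compress}, using Lemma~\ref{l:prelimbound} to rule out the exceptional second alternative. Recall that Lemma~\ref{l:compress} gives a dichotomy for any target: either every swap $e \setminus \{j\} \cup \{i\}$ with $i < j$, $j \in e$, $i \not\in e$ already lies in $G$ (which is exactly the conclusion of the corollary), or else $2 \in e$ for every edge $e \in G$. So the only thing to do is exclude the second alternative under the hypothesis $s > 2$.

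Suppose for contradiction that $(G,s,p)$ is a target with $s > 2$ and $2 \in e$ for every $e \in G$. By Lemma~\ref{l:prelimbound}, since $s > 2$, there exist $e, f \in G$ with $e \cap f = \{s\}$. Combining these two facts, $2$ lies in both $e$ and $f$, so $2 \in e \cap f = \{s\}$, forcing $s = 2$. This contradicts the assumption $s > 2$. Hence the second alternative of Lemma~\ref{l:compress} cannot occur, and the first alternative gives precisely the statement of the corollary.

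There is essentially no obstacle here: the work has already been done in Lemma~\ref{l:compress} (the compression argument) and Lemma~\ref{l:prelimbound} (the existence of a pair of edges intersecting only at $s$). The corollary is just the bookkeeping step combining them, and it is what will let subsequent arguments treat targets as genuinely ``left-compressed'' $(\leq r)$-graphs, so that the Ahlswede--Khachatrian style analysis can proceed without having to worry about the degenerate $\{2 \in e \text{ for all } e\}$ configuration.
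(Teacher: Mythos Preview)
Your proof is correct and follows essentially the same approach as the paper: invoke the dichotomy of Lemma~\ref{l:compress}, then use Lemma~\ref{l:prelimbound} to rule out the alternative that $2 \in e$ for all $e \in G$. The paper's version is slightly terser (it simply notes that $2 \in e \cap f$ for all $e,f$ contradicts Lemma~\ref{l:prelimbound}), but the content is identical.
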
	
\begin{proof}
	By Lemma~\ref{l:compress} either the corollary holds, or $2 \in e$ for every $e \in G$. However, if $2 \in e$ for every $e \in G$, then $2 \in e \cap f$ for all $e, f \in G$, contradicting Lemma~\ref{l:prelimbound}.
\end{proof}

The main step in the proof of Theorem \ref{t:wiss} involves removing $s$ from every element of $(G,s,p)$, reducing $p(s)$ to $0$, and modifying $G$ so that the resulting set system  is still intersecting, as we did in Lemma~\ref{l:prelimbound} above. We start by analyzing the change in weight of edges after such a modification.

\begin{lem} \label{l:contribution}
	Let $s$ be a positive integer, let $p$ be  a probability distribution on $[s]^{+}$ such that $p(\infty) \geq p(s)> 0$, and let $e \subseteq [s]$, $|e| \leq r$ be such that  $s \in e$. Then	 
	\begin{equation}\label{e:cc2}
	w_{\bar{p}}(e \backslash \{s\}) \geq 2 w_p(e).
	\end{equation}
\end{lem}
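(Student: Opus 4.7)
The plan is to reduce the inequality to a clean elementary calculation. Writing $k = r - |e| \in \{0,1,\ldots,r-1\}$ and unfolding the definitions, we have
\[
w_p(e) = \frac{r!}{k!}\, p_\infty^{k}\prod_{i\in e}p(i),
\qquad
w_{\bar p}(e\setminus\{s\}) = \frac{r!}{(k+1)!}\,(p_\infty+p(s))^{k+1}\!\!\prod_{i\in e\setminus\{s\}}p(i),
\]
and since $s\in e$ the product over $e\setminus\{s\}$ equals $\bigl(\prod_{i\in e}p(i)\bigr)/p(s)$. So after cancellation the inequality $w_{\bar p}(e\setminus\{s\})\ge 2\,w_p(e)$ is equivalent to
\[
(a+b)^{k+1} \;\ge\; 2(k+1)\,a^{k}\,b,
\]
where $a=p_\infty$ and $b=p(s)$, with $a\ge b>0$. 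Thus the task is purely to verify this scalar inequality.

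The plan is to split on whether $k=0$ or $k\ge 1$. For $k=0$ (that is, $|e|=r$), the inequality reduces to $a+b\ge 2b$, which is exactly the hypothesis $p_\infty\ge p(s)$; this is the one and only place where that assumption is used. For $k\ge 1$, I would apply weighted AM--GM to $k$ copies of $a/k$ and one copy of $b$:
\[
\frac{a+b}{k+1}
=\frac{k\cdot(a/k)+b}{k+1}
\;\ge\; \bigl((a/k)^{k}\,b\bigr)^{1/(k+1)},
\]
which upon raising to the $(k+1)$-st power gives $(a+b)^{k+1}\ge\bigl((k+1)^{k+1}/k^{k}\bigr)a^{k}b=(k+1)(1+1/k)^{k}a^{k}b$. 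Since the sequence $(1+1/k)^{k}$ is increasing in $k$ and equals $2$ at $k=1$, we have $(1+1/k)^{k}\ge 2$ for every $k\ge 1$, which finishes that case.

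There is really no serious obstacle here: the statement is a one-variable inequality in disguise, and the only subtle point is bookkeeping the exponents when multiplying by $p(s)/p(s)$ to convert $w_p$ into a form sharing the product $\prod_{i\in e}p(i)$ with $w_{\bar p}(e\setminus\{s\})$. I would also remark in passing that the hypothesis $p_\infty\ge p(s)$ is only needed for edges of full size $|e|=r$; for smaller edges the weighted AM--GM bound is strong enough to give factor $2$ unconditionally, and in fact the bound $(k+1)(1+1/k)^{k}$ is tight for $k=1$, so no improvement of the constant $2$ is possible at this level of generality.
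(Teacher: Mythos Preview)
Your proof is correct. After unwinding the definitions exactly as you do, the inequality reduces to $(a+b)^{k+1}\ge 2(k+1)a^kb$ with $a=p_\infty$, $b=p(s)$, $a\ge b>0$, and your case split ($k=0$ via the hypothesis, $k\ge1$ via weighted AM--GM together with $(1+1/k)^k\ge 2$) is clean and valid.

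This is a genuinely different route from the paper's. The paper keeps the probabilistic interpretation in play: it defines an auxiliary event $A$ (the restriction of $\mathbf{S}^r_p$ to $[s-1]$ equals $e\setminus\{s\}$ and $s$ appears zero or two times) and argues that $\Pr[A]\le w_{\bar p}(e\setminus\{s\})-w_p(e)$ while also $\Pr[A]\ge w_p(e)$, the latter established by computing
\[
\frac{\Pr[A]}{w_p(e)} \;=\; \frac{p_\infty}{p(s)(r-l+1)}+\frac{(r-l)\,p(s)}{2p_\infty}
\]
and then using AM--GM (after the observation $(r-l)\ge(r-l+1)/2$ for $l<r$) to bound this sum below by $1$. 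Your argument bypasses the probabilistic detour entirely and reaches a slightly sharper scalar inequality in one step; the paper's route, on the other hand, makes visible \emph{why} the factor $2$ arises --- it is the probability mass coming from the ``zero or two copies of $s$'' events. Both proofs ultimately hinge on an AM--GM, but yours is more compact and gives the tight constant more transparently (your remark that $k=1$ saturates the bound is correct and is not explicit in the paper's argument).
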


\begin{proof}
	Let $l=|e|$.
	Recall that $w_p(e)$ is the probability that the restriction of $\mbf{S}^r_p$ to $[s]$ is $e$.Similarly, $w_{\bar{p}}(e \backslash \{s\})$ is the probability that the restriction of $\mbf{S}^r_p$ to $[s-1]$ is $e \backslash \{s\}$. Let $A$ be the event that the restriction of $\mbf{S}^r_p$ to $[s-1]$ is $e \backslash \{s\}$ and $s$ occurs in $\mbf{S}^r_p$  zero times or twice. 
	Clearly, $\brm{Pr}[A] \leq w_{\bar{p}}(e \backslash \{s\}) - w_p(e)$. We will show that $\brm{Pr}[A] \geq w_p(e)$, implying the lemma. Let $x=\prod_{i \in e}p(i)$. Then 
	\begin{equation}\label{e:contr1}
	w_p(e)=\frac{r!}{(r-l)!}xp_{\infty}^{r-l}
	\end{equation}
	and
	\begin{align}
	\brm{Pr}[A] =& \frac{r!}{(r-l+1)!}p_{\infty}^{r-l+1}\prod_{i \in e \backslash \{s\} }p(i)
	+ \frac{r!}{(r-l-1)!}p_{\infty}^{r-l-1}\frac{p^2(s)\prod_{i \in e \backslash \{s\} }p(i)}{2} \nonumber \\
	=& \frac{r!}{(r-l+1)!}p_{\infty}^{r-l+1} \frac{x}{p(s)} 
	+ \frac{r!}{(r-l-1)!}p_{\infty}^{r-l-1}\frac{xp(s)}{2}
	\label{e:contr2}
	\end{align}
	Combining, (\ref{e:contr1})  and (\ref{e:contr2}), we have
	$$
	\frac{w_{\bar{p}}(e \backslash \{s\}) - w_p(e)}{w_p(e)} \geq \frac{\brm{Pr}[A]}{w_p(e)}  
	= \frac{p_{\infty}}{p(s)(r-l+1)} + \frac{(r-l)p(s)}{2p_{\infty}}.  
	$$
	If $l=r$, then $(w_{\bar{p}}(e \backslash \{s\}) - w_p(e))/w_p(e) \geq 1$, as $p_{\infty} \geq p(s)$.
	Otherwise, $(r-l)\geq (r-l+1)/2$, and $$
	\frac{w_{\bar{p}}(e \backslash \{s\}) - w_p(e)}{w_p(e)} \geq \left(\frac{p_{\infty}}{p(s)(r-l+1)}\right) + \frac{1}{4}\left(\frac{p(s)(r-l+1)}{p_{\infty}}\right) 
	\geq 1, 
	$$
	where the second inequality is AM-GM. The inequality  (\ref{e:cc2}) follows.
\end{proof} 

All the necessary tools in hand, we continue the proof of  Theorem~\ref{t:wiss}. Let $c=1/500$.
We prove by induction on $s$ that if $(G,s,p)$ is a non-principal w.i.s.s. then \begin{equation}\label{e:goal}
w_p(G) \leq L_r - c 2^{-\min(2r-2,s-1)}.
\end{equation} Theorem~\ref{t:wiss} with $c_r = c2^{-2r+2}$ is implied by this statement.

The base case $s=1$ is trivial as every w.i.s.s. $(G,s,p)$ with $s=1$ is principal. 

We divide the proof of the induction step into several cases. In the first  case, Lemma~\ref{l:contribution} and the argument uses the compression techniques and tools developed above. In the remaining cases, the proof is fairly straightforward for large $r$, but the small $r$ cases require brute force computation. 

\vskip 5pt \noindent {\bf Case 1: $s \geq 3$ and  $p(\infty) \geq p(s)$.} 

As $s > 2$, by Lemma~\ref{l:prelimbound}, for every $e \in G$ there exists at most one $f \in G$ such that 
$(e \backslash \{s\}) \cap(f \backslash \{s\}) = \emptyset$. Let $$G_0 = \{e \in G \:|\: f \cap e \neq \{s\} \mathrm{\:  for\: every\:} f \in G \},$$ and let $G'=G -G_0$.  For 

By the preceding observation $G'$ can be partitioned into two $(\leq r)$-graphs $H_1$ and $H_2$ such that if $e,f  \in G$ are such that $(e \backslash \{s\}) \cap(f \backslash \{s\}) = \emptyset$ then $e \in H_i$, $f \in H_{3-i}$ for some $i \in [2]$.
It follows that $(\leq r)$-graphs
$G_1=(G_0-s) \cup (H_1 - s)$ and $G_2=(G_0-s) \cup (H_2 - s)$ are both intersecting. Note further that $(G_0-s) \cap (H_i-s) =\emptyset$ for $i=1,2$. Indeed, for every $e \in H_i$ there exists $f \in H_{3-i}$ such that $(e \setminus \{s\}) \cap f = \emptyset$ implying $e \setminus \{s\} \not \in G$. 
By  applying Lemma~\ref{l:contribution} to every element of $H_i$ we have 
\begin{equation}\label{e:result1}
w_{\bar{p}}(H_i -s) \geq 2 w_p(H_i)
\end{equation}
for $i=1,2$.
Note further, that as in Lemma~\ref{l:prelimbound} we have
\begin{equation}\label{e:result2}
w_{\bar{p}}(G_0-s)  \geq w_p(G_0),
\end{equation}  
Summing (\ref{e:result1}) and (\ref{e:result2}) we obtain
\begin{equation}\label{e:result3}
\frac{w_{\bar{p}}(G_1)+w_{\bar{p}}(G_2)}{2}  \geq \frac{(w_p(G_0)+2w_p(H_1))+(w_p(G_0)+2w_p(H_2))}{2} = w_p(G).
\end{equation} 
Note that at least one of the w.i.s.s. $(G_1,s-1,\bar{p})$ and $(G_2,s-1,\bar{p})$ is non-principal, as $ G_1 \cup G_2 = G-e$.  By Lemma~\ref{l:prelimbound} $s \leq 2r-1$ , and
we suppose by symmetry, that $(G_1,s-1,\bar{p})$ is non-principal. Then $w_{\bar{p}}(G_1) \leq L_r - c2^{-s+2}$ by the induction hypothesis, and  $w_{\bar{p}}(G_2) \leq L_r$, using  the induction hypothesis if $G_2$ is also non-principal.  The inequality (\ref{e:result3}) now implies $$w_{p}(G) \leq \frac{(L_r - c2^{-s+2})+L_r}{2}=L_r - c2^{-s+1},$$
as desired.

\vskip 5pt \noindent {\bf Case 2:  $s=2$.}

As $(G,2,p)$ is non-principal, we have $\{2\} \in G$. Let $x = p(1), y=p(2)$. 
By the AM-GM inequality we have
\begin{align}\label{e:s21}
xy(1-x-y)^{r-2} = \frac{1}{(r-2)^2}((r-2)x)((r-2)y)(1-x-y)^{r-2} \leq \frac{1}{(r-2)^2} \left(\frac{r-2}{r} \right)^r,
\end{align}
and
\begin{align}\label{e:s22}
y(1-2y)^{r-1} = \frac{1}{2(r-1)}(2(r-1)y)(1-2y)^{r-1} \leq \frac{1}{2(r-1)} \left(\frac{r-1}{r} \right)^r.
\end{align} 
Using (\ref{e:s21}) and (\ref{e:s22}) we obtain
\begin{align*}
w_p(G) &\leq r(r-1)xy(1-x-y)^{r-2} + ry(1-x-y)^{r-1} \\&\leq r(r-1)xy(1-x-y)^{r-2} + ry(1-2y)^{r-1} \\& \leq \frac{r(r-1)(r-2)^{r-2}}{r^r}+ \frac{r(r-1)^{r-1}}{2r^r} \\&= L_r\left(\left(\frac{r-2}{r-1}\right)^{r-2}+\frac{1}{2}\right) \\&\leq L_r -\frac{L_r}{18} \leq L_r -c,
\end{align*}
as desired.

\vskip 5pt \noindent {\bf Case 3:  $r \geq 5$ and $p(\infty) \leq p(s)$.} 

Let $z = \sum_{i=1}^{s}p(i)$. Then $z \geq sp(s)$, and so $z  \geq \frac{s}{s+1}$ and therefore $p(\infty) \leq 1/(s+1)$ . We upper bound $w_p(G)$ by the probability of the event $C$ that the restriction of $\mathbf{S}^r_p$ to $[s]$ has no repeated elements. Clearly, given $z \geq \frac{s}{s+1}$, $\brm{Pr}[C]$ is maximized when $p(1)=p(2)=\ldots=p(s)$ as $\brm{Pr}[C]$ is a symmetric multi-linear function of $p(1),\ldots,p(s)$, and it is further maximized when $p(\infty)$ is maximum. Thus we assume that $p(x)=1/(s+1)$ for every $x \in [s]^{+}$. Let $C_0$ be the event  that $\mathbf{S}^r_p$ has no repeated elements at all. Then
\begin{equation}
\brm{Pr}[C_0] = \frac{(s+1)!}{(s+1-r)!(s+1)^{r}}, 
\end{equation}
when $s+1 \geq r$, and $\brm{Pr}[C_0] = 0$, if $s+1 < r$. Let $\bar{C_0}$ denote the negation of the event $C_0$.
Then$\brm{Pr}[C| \bar{C_0}] \leq \frac{1}{s+1}$, by symmetry. If $s+1 < r$ 
we have $$w_p(G) \leq \brm{Pr}[C] = \brm{Pr}[C | \bar{C_0}]  \leq \frac{1}{s+1} \leq \frac{1}{3} \leq L_r -c,$$
as $L_r \geq 1/e$ and $c \leq 1/e-1/3$. 
Thus we assume $s+1 \geq r$. We crudely estimate $w_p(G)$ as follows: 
\begin{align*}
w_p(G) &\leq  \brm{Pr}[C] =   \brm{Pr}[C_0] + \brm{Pr}[C | \bar{C_0}](1- \brm{Pr}[C_0]) \\&\leq \frac{1}{s+1} + \frac{(s+1)!}{(s+1-r)!(s+1)^{r}} \leq \frac{1}{r} + \prod_{i=0}^{r-1}\frac{s+1-i}{s+1} \\ &\leq \frac{1}{r} + \left(\frac{2s+3-r}{2s+2}\right)^{r} \leq \frac{1}{r} + \left(\frac{3r+1}{4r}\right)^{r+1},
\end{align*}
where the third inequality is by AM-GM inequality.
The function $f(r) = \frac{1}{r} + \left(\frac{3r+1}{4r}\right)^{r} $ decreases with $r$, and $f(7)<1/3$. Thus $w_p(G) \leq L_r - (1/e-1/3) \leq L_r -c$ for $r \geq 7$.

The cases $r=5,6$ require more care. We use the precise formula   
$$
\brm{Pr}[C] = \frac{1}{(s+1)^r}\sum_{i=0}^{r}\binom{r}{i}\frac{s!}{(s-i)!}.
$$
and verify that $\brm{Pr}[C] \leq L_r-0.04$ for $r \in \{5,6\}$ and $r \leq s \leq 2r-1$ by computing the corresponding nine values.

\vskip 5pt \noindent {\bf Case 4:  $r=4$ and $s>2$.} 

Suppose first that $\{2,3,4\} \in G$, then by Corollary~\ref{c:compress} every three element subset of $[4]$ is an edge of $G$. Therefore $|e \cap [4]| \geq 2$ for every $e \in G$. 
It follows $w_p(G)$ is upper bounded by the probability of the event $A$ that $\mbf{S}^{r}(p)$ contains at least two elements of $[4]$, but no element of $[4]$ appears twice. As in the previous case, the probability of $A$ is clearly maximized $p(1)=p(2)=p(3)=p(4)=x$ for some $0 \leq x \leq 1/4,$ and so we assume that these equalities hold.
\begin{align*}
w_p(G) &\leq \brm{Pr}[A] \\ &\leq \sum_{i=2}^4\binom{4}{i}\frac{4!}{(4-i)!}x^i(1-4x)^{4-i} \\&=72x^2(1-4x)^2+96x^3(1-4x)+24x^4 \leq 0.41,
\end{align*}
where the last inequality is obtained by explicitly computing the maximum of $72x^2(1-4x)^2+96x^3(1-4x)+24x^4$ on $0 \leq x \leq 1/4.$\footnote{The maximum is equal to $$24 \left(\frac{3 \left(5-\sqrt{3}\right)^4}{21296}-\frac{5 \left(5-\sqrt{3}\right)^3}{2662}+\frac{3}{484} \left(5-\sqrt{3}\right)^2\right) $$ and is achieved at $x = (5 - \sqrt{3})/22$.} As $e_4 = 0.421875$, it follows that $w_p(G) \leq e_4 - 0.01$ in this case.

Thus $\{2,3,4\} \not \in G$. By Corollary~\ref{c:compress}, every edge of $G$ contains at least two elements of $[s]$, including $1$, and no repeated elements of $[s]$, or contains $4$ distinct elements of $\{2,\ldots,s\}$. Let $B$ be the event that  $\mbf{S}^{r}(p)$ produces a multiset with the above properties. Thus $w_p(G) \leq \brm{Pr}[B]$. Once again, $\brm{Pr}[B]$ is maximized when $p(i)=y$ for $i=2,3,\ldots,s$ for some $y$. Let $p(1)=x$. Let $C$ be the event that  $\mbf{S}^{r}(p)$ contains $1$ exactly once. Then \begin{equation}\label{e:last1}\brm{Pr}[C]=4x(1-x)^3, \end{equation} 
\begin{equation}\label{e:last2}\brm{Pr}[B \setminus C]= \frac{(s-1)!}{(s-5)!}y^4,\end{equation} 
and \begin{equation}\label{e:last3}\brm{Pr}[C \setminus B] = 4x(1-x-(s-1)y)^3 + 12(s-1)xy^2(1-x-y)+4(s-1)xy^3.\end{equation} 
As $s \leq 7$, $x \geq y$ and $1-x-y \geq (s-2)y$, we have 
\begin{equation}\label{e:last4}12(s-1)xy^2(1-x-y) \geq  \frac{(s-1)!}{(s-5)!}y^4.\end{equation} Moreover, 
\begin{align}\label{e:last5}4&x(1-x-(s-1)y)^3 + 4(s-1)xy^3 \notag\\ &\geq \frac{4}{(s-1)^2}x ((1-x-(s-1)y)^3 + ((s-1)y)^3) \notag \\& \geq \frac{8}{36}x\left(\frac{1-x}{2}\right)^3
\end{align}
Combining (\ref{e:last1})--(\ref{e:last5}), we obtain
$$w_p(G) \leq \brm{Pr}[B] = \brm{Pr}[C] +\brm{Pr}[B \setminus C] - \brm{Pr}[C \setminus B] \leq  \left(4 - \frac{1}{36} \right)x(1-x)^3 \leq e_4 - e_4/144 \leq e_4 - c_4,$$
as desired.

\section{Stability: Proof of Theorem~\ref{mainresult}}
\label{stability}
\subsection{Local Stability}
\label{sec:prelim}
In this section we introduce the result from~\cite{extensions}, which builds on the techniques originally presented in~\cite{gentriangle}, and allows us to reduce the proof of Theorem~\ref{mainresult} to $r$-graphs which are ``close'' to the conjectured extremum.

We say that an $r$-graph $ {G}$ is obtained from an $r$-graph $ {F}$ by \emph{cloning a vertex $v$ to a set $W$} if $ {F} \subseteq  {G}$, $V( {G}) \setminus V( {F}) = W\setminus\{v\}$, and $ L_{ {G}}(w)= L_{ {F}}(v)$ for every $w \in W$. We say that $ {G}$ is \emph{a blowup of $ {F}$} if $ {G}$ is isomorphic to an $r$-graph obtained from $ {F}$ by repeatedly cloning and deleting vertices. We denote the set of all blowups of $ {F}$ by $\mathcal{B}( {F})$. We say that a family of $r$-graphs is \emph{clonable} if it is closed under the operation of taking blowups. Note that the family of all stars is clonable.

For a family of $r$-graphs $\mathcal{F}$, let 
$$m(\mathcal{F},n):=\max_{\substack{ {F} \in \mathcal{F} \\ \brm{v}({ {F}}) = n}} | {F}|$$
denote the maximum number of edges in an $r$-graph in  $\mathcal{F}$ on $n$ vertices.

Let $\mathcal{F}$ and $\mathcal{H}$ be two families of $r$-graphs. We define \emph{the distance $d_{\mathcal{F}}( {F})$ from an $r$-graph $ {F}$ to a family $\mathcal{F}$} as 
\[ d_{\mathcal{F}}( {F}):=\min_{\substack{ {F'}\in\mathcal{F} \\ \brm{v}( {F}) = \brm{v}( {F}')}}{| {F}\triangle {F}'|}.\]

For $\eps, \alpha>0 $, we say that $\mathcal{F}$ is $(\mathcal{H}, \eps, \alpha)$-\emph{locally stable} if there exists $n_0 \in \mathbb{N}$ such that for all $ {F}\in\mathcal{F}$ with $\brm{v}( {F}) =n \geq n_0$ and $d_{\mathcal{H}}( {F})\leq \eps n^r$ we have
\begin{equation}\label{eq:localstability}
| {F}|\leq m(\mathcal{H},n) - \alpha d_{\mathcal{H}}( {F}).
\end{equation}
We say that $\mathcal{F}$ is $\mathcal{H}$-\emph{locally stable}  if $\mathcal{F}$ is $(\mathcal{H}, \eps, \alpha)$-locally stable for some choice of $\eps$ and $\alpha$. We say that $\mathcal{F}$ is $(\mathcal{H}, \alpha)$-\emph{stable} if it is $(\mathcal{H}, 1, \alpha)$-\emph{locally stable}, that is the inequality (\ref{eq:localstability}) holds for all  $ {F}\in\mathcal{F}$ with $\brm{v}( {F}) =n \geq n_0$. We say that \(\mathcal{F}\) is \emph{\(\mathcal{H}\)-stable}, if \(\mathcal{F}\) is \((\mathcal{H}, \alpha)\)-stable for some choice of \(\alpha\). We refer the reader to~\cite{gentriangle} for the detailed discussion of this notion of stability and its differences from the classical definition. 

For $\eps, \alpha>0$, we say that a family $\mathcal{F}$ of $r$-graphs is  $(\mathcal{H}, \eps, \alpha)$-\emph{vertex locally stable} if there exists $n_0 \in \mathbb{N}$ such that for all $ {F}\in\mathcal{F}$ with $\brm{v}( {F}) =n \geq n_0$,  $d_{\mathcal{H}}( {F})\leq \eps n^{r}$, and 
$| L_{ {F}}(v)| \geq r(1-\eps)m(\mathcal{H},n)/n$  for every $v \in V( {F})$, we have
\[| {F}|\leq m(\mathcal{H},n) - \alpha d_{\mathcal{H}}( {F}).\]
We say that $\mathcal{F}$ is  $\mathcal{H}$-\emph{vertex locally stable} if $\mathcal{F}$ is  $(\mathcal{H}, \eps, \alpha)$-vertex locally stable for some $\eps,\alpha$. 
It is shown in~\cite{gentriangle} that vertex local stability implies local stability under mild conditions.

Let ${M}^{(r)}_2$ denote the $r$-graph consisting of two vertex disjoint edges. Note that \( {K}_{r,r}^{(r)} = \brm{Ext}({M}^{(r)}_2)\)and that $\brm{Forb}( {M}^{(r)}_2)$ is exactly the class of intersecting $r$-graphs.

We are now ready to state the main result from~\cite{extensions} that we will be using this paper. The result in full generality requires one to extend the notions of distance and stability to weighted $r$-graphs. In the interest of brevity, we do not present the corresponding definitions and instead state a direct corollary of~\cite[Corollary 2.8]{extensions}, which is necessary for our purposes. An interested reader can verify that the next theorem is indeed a direct weakening of~\cite[Corollary 2.8]{extensions}.   

\begin{thm}\label{thm:extensions} Let $ {G}$ be an $r$-graph, let $\mathcal{F}=\brm{Forb}(\brm{Ext}( {G}))$, let $\mathcal{F}^*= \brm{Forb}( {G})$, and let $\mathcal{H} \subseteq \mathcal{F}$ be a clonable family of $r$-graphs. If the following conditions hold
	\begin{description}
		\item[(T1)] $\mathcal{F}$ is  $\mathcal{H}$-vertex locally stable,
		\item[(T2)] there exists a constant $c>0$ such that $\lambda( {F}) \leq  \lambda(\mathcal{H})-c$ for every $ {F} \in \mathcal{F^*} - \mathcal{H}$.
	\end{description}
then $\mathcal{F}$ is $\mathcal{H}$-stable. In particular, there exists $n_0 \in \bb{N}$ such that if $ {F} \in \mathcal{F}$ satisfies $\pl{v}( {F})=n$ and  $| {F}| =m(\mathcal{F},n)$ for some $n \geq n_0$ then $ {F} \in \mathcal{H}$.	
\end{thm}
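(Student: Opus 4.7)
The plan is to bootstrap the vertex-local stability hypothesis (T1) into full $\mathcal{H}$-stability by a two-step reduction: first, trim an arbitrary $F \in \mathcal{F}$ to a nearly regular subgraph $F'$ on almost all of $V(F)$; second, use the Lagrangian gap (T2) to show that any near-extremal $F$ must already lie close to $\mathcal{H}$ in edit distance. Once both conditions needed by (T1) hold for $F'$, an application of (T1) yields the stability inequality on $F'$, which transfers back to $F$ at the cost of slightly worse constants.

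\emph{Regularization.} Given $F\in\mathcal{F}$ on $n$ vertices, I would iteratively delete any vertex $v$ with $|L_F(v)|<r(1-\eps)m(\mathcal{H},n)/n$. Because $\mathcal{H}$ is clonable and therefore contains arbitrarily large blowups, $m(\mathcal{H},n)=\Theta(n^r)$, so each deletion removes only a controlled fraction of edges. If $|F|$ is not within a tiny fraction of $m(\mathcal{H},n)$ then the desired inequality $|F|\leq m(\mathcal{H},n)-\alpha d_{\mathcal{H}}(F)$ is immediate since $d_{\mathcal{H}}(F)\leq\binom{n}{r}$; otherwise the number $k$ of deleted vertices is $o(n)$, and the surviving $F'\subseteq F$ on $n'=n-k$ vertices is nearly regular. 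Clonability of $\mathcal{H}$ also ensures that $m(\mathcal{H},n')=m(\mathcal{H},n)(1-o(1))$ and that any $H'\in\mathcal{H}$ on $n'$ vertices closest to $F'$ can be cloned back to an $H\in\mathcal{H}$ on $n$ vertices with $|H\triangle F|$ exceeding $|H'\triangle F'|$ by at most $O(kn^{r-1})$.

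\emph{Proximity to $\mathcal{H}$ via (T2).} The substantive step is to upgrade $d_{\mathcal{H}}(F')\leq\eps(n')^r$ from a hypothesis to a consequence of $|F|$ being near-extremal. I would argue by contradiction: if $d_{\mathcal{H}}(F)>\eps n^r$, I would use a random-sampling / link-averaging argument in the spirit of \cite{gentriangle} to exhibit a constant-sized subset $W\subseteq V(F)$ such that the induced hypergraph $F[W]$ is $G$-free (so $F[W]\in\mathcal{F}^*$), is not in $\mathcal{H}$ (lifted through clonability, using $d_{\mathcal{H}}(F)>\eps n^r$), yet still carries weight comparable to the global density $|F|/n^r$. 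The existence of such a $W$ relies on the fact that $F\in\brm{Forb}(\brm{Ext}(G))$: every copy of $G$ inside $F$ contains a pair not covered by any edge of $F$, which is a strong local constraint forcing $G$-copies to be packed sparsely enough that a constant fraction of constant-sized restrictions avoid them entirely. Applying (T2) to $F[W]$ then gives $\lambda(F[W])\leq\lambda(\mathcal{H})-c$, which in turn upper bounds $|F|/n^r$ strictly below $r!\lambda(\mathcal{H})$, by an amount proportional to $d_{\mathcal{H}}(F)$, delivering the stability inequality directly in the ``far'' regime.

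\emph{Assembly and main obstacle.} With both hypotheses of (T1) now in place for $F'$, (T1) yields $|F'|\leq m(\mathcal{H},n')-\alpha_0 d_{\mathcal{H}}(F')$; reversing the trimming of Step~1 and using the blowup-based transfer lemma above converts this into $|F|\leq m(\mathcal{H},n)-\alpha d_{\mathcal{H}}(F)$ for a slightly smaller $\alpha>0$, which is exactly $\mathcal{H}$-stability of $\mathcal{F}$; applying the resulting inequality to any extremal $F$ with $|F|=m(\mathcal{F},n)$ forces $d_{\mathcal{H}}(F)=0$, i.e. $F\in\mathcal{H}$. The main obstacle I anticipate is the proximity step: the naive reduction ``restrict to a random small subset and hope it is $G$-free'' fails, since $F$ can contain very many copies of $G$ even while being $\brm{Ext}(G)$-free. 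The correct argument requires a supersaturation/removal-type lemma exploiting the uncovered-pair structure inside each copy of $G$, calibrated so that the Lagrangian loss of $c$ promised by (T2) translates into a linear-in-$d_{\mathcal{H}}(F)$ deficit in $|F|$ that survives all the rounding and transfer constants of Steps~1 and~3.
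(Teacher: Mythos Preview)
The paper does not prove this theorem. It is stated there as a black box, explicitly introduced as ``a direct corollary of \cite[Corollary 2.8]{extensions}'' from the local-stability framework developed by two of the authors; no argument is given in the present paper. So there is no in-paper proof to compare your proposal against.

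As to whether your sketch is a viable route to an independent proof: the regularization step and the transfer between $F'$ and $F$ via clonability are standard and fine. The real content is your proximity step, and here your plan is underspecified in a way that matters. You want to find a constant-sized $W$ with $F[W]\in\mathcal{F}^*\setminus\mathcal{H}$ and weight close to the global density; but the correct object to feed into (T2) is not a $G$-free induced subgraph but a \emph{dense} $G$-hom-free configuration (compare Lemma~\ref{lem:hom}). The way the machinery in \cite{extensions,gentriangle} actually proceeds is through weighted $r$-graphs: one passes to a suitable weighted restriction that covers pairs (so $G$-free coincides with $G$-hom-free there), and then reads off the Lagrangian bound. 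Your ``uncovered-pair supersaturation'' idea points in the right direction---the extension structure does force uncovered pairs inside every copy of $G$---but turning that into a linear-in-$d_{\mathcal{H}}(F)$ deficit requires the weighted-hypergraph formalism that the paper deliberately omits. Without that, the step ``$\lambda(F[W])\leq\lambda(\mathcal{H})-c$ upper bounds $|F|/n^r$ by an amount proportional to $d_{\mathcal{H}}(F)$'' is an assertion, not an argument: you have not explained why the gap scales with the edit distance rather than merely being a fixed constant loss.
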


Let $\mathcal{S}$ denote the family of $r$-graphs which are  stars.
To derive Theorem~\ref{mainresult} from Theorem~\ref{thm:extensions} it suffices to show that (T1) and (T2) hold when $\mathcal{F}=\brm{Forb}( {K}_{r,r}^{(r)})$, $\mathcal{F}^*$ is  the family of  all intersecting $r$-graphs, and \(\mathcal{H}=\mathcal{S}\). The validity of condition (T2) in this case follows directly from Theorem~\ref{maxLagrangian}. Thus it remains to verify condition (T1). It will follow from the theorem we introduce next. 

Let \[d_r :=\frac{\left(1-\frac{1}{r}\right)^{r-1}}{(r-1)!},\]
\[e_r:=\frac{\left(1-\frac{1}{r}\right)^{r-1}}{r!}.\] 
Then $m(\mathcal{S},n)=e_r n^r+o(n^r)$. 

\begin{thm}\label{vertexstab} For every $r \geq 2$ there exist $n_0  \in \bb{N}$  and $\delta >0$ such that the following holds. Let $ {F}$ be a $ {K}_{r,r}^{(r)}$-free $r$-graph with $v( {F})=n \geq n_0$. If
	\begin{description}
		\item[(S1)] $|L_{ {F}}(v)| \geq (1-\delta)d_r n^{r-1}$ for every $v \in V( {F})$, and
		\item[(S2)] there exists a star $ {S}$ such that $| {F} \triangle  {S}| \leq \delta n^r$, 
	\end{description}
then $ {F}$ is a star. 	
\end{thm}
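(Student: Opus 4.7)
The plan is to extract a candidate star partition of $F$ from the nearby star $S$, and then use the forbidden $K_{r,r}^{(r)}$ to rule out any edge violating it.

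First, observe that (S1) gives $r|F| = \sum_{v} |L_F(v)| \geq n(1-\delta)d_r n^{r-1}$, hence $|F| \geq (1-\delta)e_r n^r$, essentially matching the maximum $m(\mathcal{S},n) = (1+o(1))e_r n^r$. Combined with (S2), this forces the star partition $(A,B)$ of $S$ to satisfy $|A| = n/r + O(\delta^{1/2}n)$. Since $\sum_v |L_F(v) \triangle L_S(v)| = r|F \triangle S| \leq r\delta n^r$, all but $O(\delta^{1/2} n)$ vertices are \emph{typical} in the sense $|L_F(v) \triangle L_S(v)| \leq \delta^{1/2} n^{r-1}$. Iteratively reassigning atypical vertices to the side of the bipartition that best matches the distribution of their $F$-links yields a refined bipartition $(A^*, B^*)$ with $|A \triangle A^*| = O(\delta^{1/2} n)$, enjoying the property that every $v \in A^*$ has all but $O(\delta^{1/2}) d_r n^{r-1}$ of its $F$-link contained in $(B^*)^{(r-1)}$, while every $v \in B^*$ has all but $O(\delta^{1/2}) d_r n^{r-1}$ of its $F$-link of ``star form'' (exactly one coordinate in $A^*$).

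Now suppose for contradiction there is an edge $e \in F$ with $|e \cap A^*| \neq 1$. Call a vertex $v \in V(F)$ \emph{$e$-good} if it is typical and $|L_F(\{u,v\})| \geq c_r n^{r-2}$ for every $u \in e$, where $c_r>0$ is a small constant depending only on $r$. Since in $S$ every pair of vertices except the $A$-pairs has co-degree $\Theta(n^{r-2})$, and $F$ is close to $S$ in edit distance, typical vertices $u$ enjoy $|L_F(\{u,v\})| \geq c_r n^{r-2}$ for all but $|A^*| + o(n)$ vertices $v$. For possibly atypical $u$, the identity $\sum_v |L_F(\{u,v\})| = (r-1)|L_F(u)|$ combined with (S1) and the universal upper bound $|L_F(\{u,v\})| \leq \binom{n-2}{r-2}$ still forces a positive density of $v$ with large co-degree with $u$. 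Intersecting over the $r$ vertices $u \in e$ still leaves a positive density of $e$-good vertices in $V(F)$. Therefore one can pick a ``star-type'' edge $f = \{a\} \cup T \in F$ with $a \in A^*$, $T \in (B^*)^{(r-1)}$, $f \cap e = \emptyset$, and every vertex of $f$ being $e$-good, because the number of star-type edges in $F$ on $e$-good vertices is $\Omega(n^r)$.

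Next, greedily construct edges $g_{u,v} \in F$, one for each of the $r^2$ pairs $\{u, v\}$ with $u \in e$, $v \in f$, so that $g_{u,v}$ contains $\{u, v\}$ and $r-2$ new vertices not in $e \cup f$ and not in any previously chosen $g_{u',v'}$. At each step the number of forbidden vertices is $N = 2r + r^2(r-2)$, a constant depending only on $r$, so the number of edges in $L_F(\{u,v\})$ meeting a forbidden vertex is at most $N \cdot \binom{n-3}{r-3} = O(n^{r-3})$, whereas $|L_F(\{u,v\})| \geq c_r n^{r-2}$ by $e$-goodness. For $n$ large, the greedy step succeeds; the resulting configuration on $2r + r^2(r-2)$ vertices is exactly a copy of $K_{r,r}^{(r)}$, contradicting the hypothesis.

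The main obstacle is the refinement of the partition to $(A^*, B^*)$: we need every single vertex of $A^* \cup B^*$, not merely most of them, to display the near-star link behavior, because the bad edge $e$ in the proof by contradiction may contain otherwise atypical vertices. The pigeonhole estimate used in the goodness argument is the key device that tames this issue, since it applies uniformly to every vertex of $V(F)$, typical or not, and thus guarantees a useful density of $e$-good partners with which to build the forbidden $K_{r,r}^{(r)}$.
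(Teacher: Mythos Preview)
Your proof has a genuine gap at the step ``Intersecting over the $r$ vertices $u\in e$ still leaves a positive density of $e$-good vertices.'' The averaging identity $\sum_v |L_F(\{u,v\})|=(r-1)|L_F(u)|$ together with the trivial cap $|L_F(\{u,v\})|\le\binom{n-2}{r-2}$ only guarantees that at least a $(1-1/r)^{r-1}$ fraction of the vertices $v$ have codegree $\Omega(n^{r-2})$ with a fixed $u$. Since $r\bigl(1-(1-1/r)^{r-1}\bigr)>1$ for every $r\ge 3$, a union bound over the $r$ vertices of $e$ yields nothing; if several vertices of $e$ are atypical, the set of $e$-good vertices may be empty. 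The preceding refinement to $(A^*,B^*)$ does not rescue this: the claimed property that \emph{every} vertex has almost all of its link of one type is not a consequence of (S1) and (S2) alone --- an atypical vertex can have its link split evenly between $(B^*)^{(r-1)}$ and star-form sets, and merely reassigning it does not repair that. And even granting the refinement, when $|e\cap A^*|\ge 2$ the $e$-good vertices are forced into $B^*$ (any $u\in A^*$ has codegree $o(n^{r-2})$ with every $v\in A^*$), so no star-type $f$, which must meet $A^*$, can have all of its vertices $e$-good; edges of $F$ lying entirely in $(B^*)^{(r)}$ number at most $O(\delta^{1/2}n^r)$ and may not exist.

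The paper avoids this by not using the bad edge $e$ as a base edge of the copy of $K_{r,r}^{(r)}$. It first classifies every vertex as $A$-regular or $B$-regular, and this classification (Claim~3) crucially uses the $K_{r,r}^{(r)}$-freeness of $F$ itself, not just degree and edit-distance information: any vertex sharing an edge with the core $A_0$ is shown to have almost no link sets inside $B^{(r-1)}$, which is exactly the ingredient your refinement step is missing. Then, when two $A$-regular vertices $v_1,v_2$ lie in a common edge $e$ (Claim~1, (C11)), the two disjoint base edges $f_1,f_2$ of $K_{r,r}^{(r)}$ are chosen as star edges through $v_1$ and $v_2$ inside a large perfect patch $(A',B')$, and $e$ is used only to cover the single cross-pair $\{v_1,v_2\}$. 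In this way one never needs codegree control between two $A$-side vertices, which is precisely the obstruction that breaks your direct construction.
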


It is easy to see that Theorem~\ref{vertexstab} implies that $\brm{Forb}( {K}_{r,r}^{(r)})$ is $\mathcal{F}$-vertex locally stable.\footnote{In fact, Theorem~\ref{vertexstab} is stronger as it implies that $\brm{Forb}( {K}_{r,r}^{(r)})$ is $(\mathcal{F}, \delta,\alpha)$-vertex locally stable for \emph{every} $\alpha >0$.} Thus it remains to prove Theorem~\ref{vertexstab}. The proof occupies the rest of the section.

\subsection{Vertex Local Stability: Proof of Theorem~\ref{vertexstab}} 
\label{vertexlocalstability}

Before we start the main proof, let us state and prove two auxiliary lemmas which will need later. Our first lemma ensures that if a large star $S$ has edge  density close to the maximum possible (i.e. $e_rn^r$), then the star-partition of $S$ is
almost as balanced as the one for $S^{(r)}(n)$. More precisely, given a star $S$, with star-partition $(A,B)$ we call $S$, \emph{$\eps$-balanced}, if $|A-\frac{n}{r}|\leq \eps$.

\begin{lem}\label{lem:balanced}Let $r\geq 3$.  For every $\eps>0$ there exists $\delta>0$ and $n_0$  such that if $S$ is a star  on $n\geq n_0$ vertices such that $e(S)\geq (e_r - \delta)n^r$ then  $S$ is $\eps$-balanced.
\end{lem}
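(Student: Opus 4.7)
The plan is to convert the edge-count bound into a bound on a simple one-variable function of $|A|/n$ and then exploit the strict unimodality of that function around its unique maximum at $1/r$.

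First, since $S$ is a star with partition $(A,B)$, every edge of $S$ contains exactly one vertex of $A$ together with $r-1$ distinct vertices of $B$. Writing $a=|A|$, this gives the clean upper bound
\[
e(S) \;\leq\; a\binom{n-a}{r-1} \;\leq\; \frac{a(n-a)^{r-1}}{(r-1)!}.
\]
Setting $g(x)=x(1-x)^{r-1}$ on $[0,1]$ and dividing by $n^r$ yields $g(a/n)\geq (r-1)!\cdot e(S)/n^r \geq (r-1)!(e_r-\delta)$.

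A routine derivative computation gives $g'(x)=(1-x)^{r-2}(1-rx)$, so $g$ has a unique maximum on $[0,1]$ at $x=1/r$, and a direct calculation matches $g(1/r)=(r-1)^{r-1}/r^r=(r-1)!\cdot e_r$. Thus the hypothesis translates to $g(a/n)\geq g(1/r)-(r-1)!\,\delta$. Since $g$ is continuous on the compact interval $[0,1]$ with a unique strict maximum at $1/r$, standard compactness (or direct estimation using that $g$ is strictly concave on a neighborhood of $1/r$) provides, for every $\eps>0$, a threshold $\mu=\mu(\eps,r)>0$ such that $g(x)\geq g(1/r)-\mu$ forces $|x-1/r|\leq \eps/1$.

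Choosing $\delta$ small enough that $(r-1)!\,\delta<\mu/2$, and $n_0$ large enough that the gap between $\binom{n-a}{r-1}$ and $(n-a)^{r-1}/(r-1)!$ contributes only an $o(1)$ error on the scale of $n^r$ (which is absorbed into $\mu/2$), we conclude $|a/n-1/r|\leq \eps$, i.e.\ $\bigl||A|-n/r\bigr|\leq \eps n$. There is no serious obstacle: the only care required is the elementary error estimate converting the binomial coefficient to a pure power, together with the one-line continuity/unimodality statement for $g$; both are standard.
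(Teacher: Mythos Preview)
Your proof is correct and follows essentially the same route as the paper: bound $e(S)$ by $|A|(n-|A|)^{r-1}/(r-1)!$, normalize to a one-variable function with unique maximum at $1/r$, and invoke continuity. One small remark: since $\binom{n-a}{r-1}\le (n-a)^{r-1}/(r-1)!$ is an exact inequality (not an approximation), the error-absorption step involving $n_0$ is not actually needed; the paper's argument dispenses with it.
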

\begin{proof} Let $(A, B)$ be the star-partition of $S$, then  $|S|\leq |A|(n-|A|)^{r-1}/(r-1)!$. Setting $|A|/n=x$ and $f(x)=x(1-x)^{r-1}/(r-1)!$ we rewrite the above inequality as $|S|\leq f(x)n^r$. Note that $f:[0,1] \to [0,1]$ is a continuous function with the unique maximum $e_r$ achieved at $x^*=1/r$. It follows that for every $\eps>0$ there exists $\delta >0$ such that $f(x) \geq e_r - \delta$ implies $|x - x^*| \leq \eps$, implying the desired inequality.
\end{proof}

Let the constants $n_0,k,\eps_1,\eps_2,\delta$  be chosen implicitly to satisfy the inequalities appearing throughout the proof. It will be clear that these inequalities are satisfied as long as  $$ \frac{1}{n_0} \ll \delta \ll \eps_1 \ll \eps_2 \ll \frac{1}{k} \ll \frac{1}{r}.$$ 

Let $ {S}$ be the star satisfying (S2), let $(A,B)$ be the star partition of $ {S}$, and let  $ {S}_{c}$ be the complete star with the partition $A$ and $B$. It follows from (S1) that $| {F}| \geq (1-\delta)e_r n^r$, and thus $| {S}|\geq (e_r-2\delta)n^r$. Therefore, $| {S}_{c}  -  {S}| \leq 2\delta n^r$, as $|{ {S}}_{c}| \leq e_rn^r$. Finally, it follows that $| {F} \triangle  {S}| \leq 3\delta n^r$. By replacing  $\delta$ with $3 \delta$, we may assume that $ {S}={ {S}}_c$. 

We say that a pair $(A',B')$ with $A' \subseteq A, B' \subseteq B$ is  \emph{perfect} if $ {F}[A' \cup B']= {S}^{(r)}[A',B']$, i.e. the restriction of $ {F}$  
to $A' \cup B'$ coincides with the restriction of $ {S}$. Let $\mc{P}$ denote the set of all perfect pairs. 

For a positive integer $k$, let the random variables $\mbf{X}^k$ and $\mbf{Y}^k$ be subsets of size $k$ of $X\subseteq A$ and $Y\subseteq B$, respectively, chosen uniformly and independently at random.
We say that a vertex $v \in V( {F})$ is \emph{$(A,k,\eps)$-regular} if
$$\brm{Pr}[(\mbf{A}^{k-1} \cup \{v\}, \mbf{B}^k) \in  \mc{P} ] \geq 1 -\eps.$$
Similarly, a vertex $v \in V( {F})$ is \emph{$(B,k,\eps)$-regular} if 
 $$\brm{Pr}[(\mbf{A}^k , \mbf{B}^{k-1} \cup \{v\}) \in \mc{P} ] \geq 1 -\eps.$$
 
 The next claim motivates the above definitions. (It will be applied with $\eps=\eps_1$ and $\eps = \eps_2$ later in the proof.)
 
 \begin{claim}\label{cl:regular} 	If $\eps \ll 1/r$ then
	\begin{description}
  		\item[(C11)] if $v_1,v_2$ are distinct $(A,k,\eps)$-regular vertices then there exists no $e \in  {F}$ such that $\{v_1,v_2\} \subseteq e$,
        \item[(C12)] if $v_1,v_2, \ldots, v_r$ are $(B,k,\eps)$-regular then  $\{v_1,v_2,\ldots,v_r\} \not \in  {F}$.
	\end{description}
 \end{claim}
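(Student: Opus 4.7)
The plan is to prove both parts by contradiction, in each case constructing a copy of $K_{r,r}^{(r)} = \brm{Ext}(M^{(r)}_2)$ inside $F$. A preliminary observation is that any $(A,k,\eps)$-regular vertex must lie in $A$ and any $(B,k,\eps)$-regular vertex in $B$, because a perfect pair $(A',B') \in \mc{P}$ has $A' \subseteq A$ and $B' \subseteq B$ by definition. The main mechanism is to sample a random pair of the form prescribed by the relevant regularity condition, apply a union bound on the perfectness failures to force star-like local structure around every $v_i$ simultaneously, and exhibit inside the sample an $M^{(r)}_2$-copy whose cross-pairs are all covered by edges of $F$. A standard greedy argument, valid since $n$ is large and each cross-pair turns out to be covered by many edges of $F$, then upgrades this to a $K_{r,r}^{(r)}$-subcopy.

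For (C11), assume distinct $(A,k,\eps)$-regular vertices $v_1,v_2 \in A$ both lie in some edge $e_0 \in F$. Draw $\mbf{A}^{k-1}$ and $\mbf{B}^k$ uniformly; by the union bound, both $(\mbf{A}^{k-1} \cup \{v_i\}, \mbf{B}^k) \in \mc{P}$ with probability at least $1-2\eps$. Pick disjoint $T_1,T_2 \in \binom{\mbf{B}^k}{r-1}$, valid since $k \geq 2(r-1)$. The perfectness conditions put $e_1 := \{v_1\} \cup T_1$ and $e_2 := \{v_2\} \cup T_2$ in $F$, and they are disjoint. The cross-pair $(v_1,v_2)$ is covered by $e_0$; each $(v_i,t)$ with $t \in T_{3-i}$ is covered by a star-edge $\{v_i\} \cup T$ for any $T \in \binom{\mbf{B}^k}{r-1}$ containing $t$. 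The only cross-pairs not yet handled are those $(t_1,t_2)$ with $t_j \in T_j$, both lying inside $\mbf{B}^k$; covering these is the main obstacle. It is resolved by a separate double-counting argument: (S2) gives $|S_c \triangle F| = O(\delta n^r)$, while each $B$-pair left uncovered in $F$ forces $\Omega(n^{r-2})$ edges to be missing from $S_c$, so at most $O(\delta n^2)$ $B$-pairs are uncovered; since $\delta \ll 1/k^2$, the random $\mbf{B}^k$ avoids every uncovered pair with probability $1 - o(1)$. Combining the good events and extending greedily yields $K_{r,r}^{(r)} \subseteq F$, contradicting $K_{r,r}^{(r)}$-freeness.

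For (C12), assume $v_1,\ldots,v_r \in B$ are $(B,k,\eps)$-regular and $e_0 = \{v_1,\ldots,v_r\} \in F$. Draw $\mbf{A}^k$ and $\mbf{B}^{k-1}$; with probability at least $1 - r\eps - O(rk/n)$ every pair $(\mbf{A}^k, \mbf{B}^{k-1} \cup \{v_i\})$ is perfect and $v_i \notin \mbf{B}^{k-1}$ for all $i$. Pick any $a \in \mbf{A}^k$ and $T \in \binom{\mbf{B}^{k-1}}{r-1}$; the perfectness condition (for, say, $v_1$) puts $e_2 := \{a\} \cup T$ in $F$, and $e_2$ is disjoint from $e_1 := e_0$ by construction. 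Each cross-pair $(v_i,a)$ is covered by a star-edge $\{a,v_i\} \cup T'$ with $T' \in \binom{\mbf{B}^{k-1}}{r-2}$, and each $(v_i,t)$ with $t \in T$ by $\{a',v_i,t\} \cup T''$ with $a' \in \mbf{A}^k$ and $T'' \in \binom{\mbf{B}^{k-1} \setminus \{t\}}{r-3}$; both families lie in $F$ by the perfectness condition for the relevant $v_i$. No external coverage argument is needed, so (C12) is cleaner than (C11): the edge $e_0$ plays the role of one side of the $M^{(r)}_2$, and every covering edge is supplied by the star structure attached to some $v_i$. Greedy extension gives $K_{r,r}^{(r)} \subseteq F$, the desired contradiction.
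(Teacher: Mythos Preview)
Your argument is correct and follows the same contradiction strategy as the paper, but in (C11) you take an unnecessary detour. You flag the $B$--$B$ cross-pairs $(t_1,t_2)$ with $t_j\in T_j\subseteq \mbf{B}^k$ as ``the main obstacle'' and handle them by a separate double-counting argument on globally uncovered $B$-pairs. This is not needed: perfectness of $(\mbf{A}^{k-1}\cup\{v_1\},\mbf{B}^k)$ already says that $F$ restricted to the sample equals $S^{(r)}[\mbf{A}^{k-1}\cup\{v_1\},\mbf{B}^k]$, so every pair $t_1,t_2\in\mbf{B}^k$ is contained in $(k-1)\binom{k-2}{r-3}$ star edges $\{a,t_1,t_2,\ldots\}$ with $a\in\mbf{A}^{k-1}$, all lying inside the sample. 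The paper uses exactly this and performs the entire greedy extension inside $F[A'\cup B'\cup\{v_1,v_2\}]$, invoking the external edge $e$ only for the single pair $\{v_1,v_2\}$. Your alternative route also works but, as written, has a small loose end: you show the random $\mbf{B}^k$ avoids pairs with \emph{zero} covering edges, whereas the greedy extension needs each such pair to have \emph{many} covering edges (enough to dodge the $O(r^3)$ already-used vertices); this is easily fixed by declaring a pair ``bad'' when it has fewer than, say, $r^3$ covering edges, and the same counting goes through. Your treatment of (C12) matches the paper's.
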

 
 \begin{proof} 
 	
 {\bf (C11):} Suppose for a contradiction that there exists $e \in \mc{F}$ such that $\{v_1,v_2\} \subseteq e$. By $(A,k,\eps)$-regularity of $v_1$ and $v_2$ we have   
 $$\brm{Pr}[(\mbf{A}^{k-1} \cup \{v_1\}, \mbf{B}^k), (\mbf{A}^{k-1} \cup \{v_2\}, \mbf{B}^k) \in \mc{P}] \geq 1 -2\eps.$$
 If $\eps <1/2$ and $k > r^3 + r$  there exist  $(A'  \cup \{v_i\}, B')\in \mc{P}$ for some $A',B'$ with $|A'|\geq (r-1)^2+r$ and $|B'| \geq 2r+(r-2)(r^2+1)$ and $i=1,2$. Let $f_i \subseteq B' \cup \{v_i\}$ for $i=1,2$ be chosen so that $e \cap f_i = \{v_i\}$ and $f_1 \cap f_2 =\emptyset$ .  One can then straightforwardly find an extension of $\{f_1,f_2\}$ in $F[A' \cup B' \cup \{v_1,v_2\}] \cup \{e\}$, by using $e$ to extend the pair $\{v_1,v_2\}$, and selecting the edges to extend other pairs of vertices greedily in  $F[A' \cup B']$. Thus we obtain the desired contradiction as $F$ is $K^{(r)}_{r,r}$-free. 
 
 \vskip 10pt
 {\bf (C12):} The proof is similar to (C11). Suppose that there exists $e=\{v_1,\dots, v_r\}\in F$, such that  $v_1,v_2, \ldots, v_r$ are $(B,k,\eps)$-regular. As in (C11), if $\eps < 1/r$ we can find sufficiently large $A'\subseteq A, B'\subseteq B$ such that  $(A' , B'\cup\{v_i\})\in \mc{P}$ for $i \in [r]$. Choose $f \subseteq A' \cup B'$, such that $|f \cap A'| = 1$ and $f \cap e = \emptyset$. Then one can find an extension of $\{e,f\}$ in $\mc{F}[A' \cup B' \cup \{v_1,\ldots, v_r\}]$, choosing edges to extend pairs of vertices in $e$ and $f$ greedily.  
  \end{proof}
 
 Note that Claim~\ref{cl:regular} implies that, if every vertex of $ {F}$ is either $(A,k,\eps)$-regular or  $(B,k,\eps)$-regular for some $k,\eps$ satisfying the conditions of Claim~\ref{cl:regular}, then $ {F}$ is a star, as desired. Thus our goal is to show that all vertices are ``sufficiently regular". We start by showing that almost all vertices are.
 
 \begin{claim}\label{cl:A0B0} 	There exist $A_0 \subseteq A, B_0 \subseteq B$ such that 
	\begin{description}
 		\item[(C21)]
		$|A_0| \geq (1/r - \eps_1)n, |B_0| \geq ((r-1)/r - \eps_1)n$,
		\item [(C22)] every $v \in A_0$ is  $(A,k,\eps_1)$-regular, and
 \item[(C23)] every $v \in B_0$ is  $(B,k,\eps_1)$-regular. 
	\end{description}
 \end{claim}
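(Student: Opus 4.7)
The plan is to use a union bound together with a first-moment (Markov) argument to show that most vertices of $A$ and $B$ are already regular. First, I would set $D := F \triangle S$, so that (S2) gives $|D| \leq \delta n^r$. Combining the link-degree bound (S1) with the handshake identity yields $|F| \geq (1-\delta)e_r n^r$, whence $|S| \geq (e_r - O(\delta))n^r$, and then Lemma~\ref{lem:balanced} forces $|A| = n/r + o(n)$ and $|B| = (r-1)n/r + o(n)$. In particular $|A|, |B| = \Theta(n)$ for $n$ large, with the slack in the partition size at most $\eps_1/2 \cdot n$ for $\delta$ chosen sufficiently small.

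The pivotal observation is a union bound: for any $v \in A$, the pair $(\mbf{A}^{k-1} \cup \{v\}, \mbf{B}^k)$ fails to lie in $\mc{P}$ only when some edge of $F \setminus S$ or $S \setminus F$ is contained in the sample support $\mbf{A}^{k-1} \cup \{v\} \cup \mbf{B}^k$, and in both cases such an edge belongs to $D$. Hence
\[
  p_v := \brm{Pr}\bigl[(\mbf{A}^{k-1} \cup \{v\}, \mbf{B}^k) \notin \mc{P}\bigr] \leq \sum_{e \in D} \brm{Pr}\bigl[e \subseteq \mbf{A}^{k-1} \cup \{v\} \cup \mbf{B}^k\bigr].
\]
I would sum this over $v \in A$ and swap summation order. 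For a fixed $e \in D$ with $a := |e \cap A|$ and $b := r-a$, splitting $\sum_v$ into the $a$ terms with $v \in e \cap A$ (where only $a-1$ further vertices must land in $\mbf{A}^{k-1}$) and the $|A|-a$ terms with $v \in A \setminus e$ (where all $a$ must land in $\mbf{A}^{k-1}$), a direct computation using $|A|, |B| = \Theta(n)$ produces a per-edge contribution of at most $O(k^r/n^{r-1})$, with the implicit constant depending only on $r$. Summing over the $\leq \delta n^r$ edges of $D$ gives $\sum_{v\in A} p_v \leq C_r\,\delta k^r n$.

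Markov's inequality now shows that at most $C_r\,\delta k^r n / \eps_1$ vertices of $A$ can have $p_v > \eps_1$. The constant hierarchy $\delta \ll \eps_1 \ll 1/k \ll 1/r$ gives $C_r\,\delta k^r \leq \eps_1^2/2$, so this count is $\leq \eps_1 n/2$. Defining $A_0$ to be the remaining vertices of $A$ and combining with the balanced-partition estimate from Lemma~\ref{lem:balanced} yields $|A_0| \geq (1/r - \eps_1)n$ with every $v \in A_0$ being $(A,k,\eps_1)$-regular. The construction of $B_0$ is entirely analogous, working with the sample $\mbf{A}^k \cup \mbf{B}^{k-1} \cup \{v\}$ for $v \in B$; the per-edge contribution to $\sum_{v \in B}\brm{Pr}[(\mbf{A}^k, \mbf{B}^{k-1}\cup\{v\})\notin\mc{P}]$ is still $O(k^r/n^{r-1})$ by the same case analysis, and the same Markov step produces the required $B_0$.

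The only real obstacle is bookkeeping: one must track implicit constants (especially powers of $r$ and $k$) through the per-edge probability estimates and verify that the hierarchy $\delta \ll \eps_1 \ll 1/k \ll 1/r$ absorbs all of them. No structural information about $F$ beyond (S1)--(S2) is used in this claim; the deeper use of $K_{r,r}^{(r)}$-freeness is deferred to the subsequent claims, where $A_0$ and $B_0$ are to be enlarged to all of $A$ and $B$ via the forbidden-configuration arguments of Claim~\ref{cl:regular}.
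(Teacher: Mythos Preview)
Your proposal is correct and follows essentially the same approach as the paper: bound the probability that a random $k$-sample fails to be perfect via a union bound over edges of $D = F\triangle S$, then use an averaging/Markov argument to conclude that only $o(n)$ vertices can be irregular. The paper's presentation is marginally slicker in that it bounds $\Pr[(\mbf{A}^k,\mbf{B}^k)\notin\mc{P}]$ once for the ``unanchored'' sample and then observes that this quantity equals $\mathbb{E}_{v\in A}[p_v]$ (by generating $\mbf{A}^k$ via a uniform first vertex), which sidesteps your case split on $v\in e$ versus $v\notin e$; conversely, your version is a bit more careful in that it explicitly accounts for both directions of $D$, whereas the paper's union bound over random $\mbf{e}\in S$ formally only captures $S\setminus F$. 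These are cosmetic differences and the two arguments are interchangeable.
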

 \begin{proof} Let $\bf{e}$ be an edge of $S$ chosen uniformly at random. 
 $$\brm{Pr}[\mbf{e} \not \in F] \leq \frac{|F \triangle S|} {|S|} \leq \frac{\delta}{e_r-2\delta} \leq \frac{2}{e_r}\delta.$$
  
Therefore, \begin{equation}\label{e:c21}
\brm{Pr}[(\mbf{A}^{k},\mbf{B}^{k})\notin \mc{P})] \leq  {k}\binom{{k}}{r-1} \brm{Pr}[\mbf{e} \not \in F] \leq   {k}\binom{{k}}{r-1}\frac{2}{e_r}\delta,
\end{equation}
where the first inequality holds, as one can choose $\mbf{e}$ by first choosing the pair $(\mbf{A}^{k},\mbf{B}^{k})$,  and then choosing an $r$-element $\mbf{e} \subseteq \mbf{A}^{k} \cup \mbf{B}^{k}$ such that $|\mbf{e} \cap  \mbf{A}^{k}| =1$ uniformly at random.
 
Let $A_0$ be the set of all vertices in $A$ which are $(A,k,\eps_1)$-regular. Then
\begin{equation}\label{e:c22}
\brm{Pr}[(\mbf{A}^{k},\mbf{B}^{k}) \notin \mc{P})] \geq \eps_1\frac{|A|-|A_0|}{|A|},
\end{equation}
as $\mbf{A}^{k}$ can be chosen by first choosing a single element of $v \in A$ uniformly at random to be in it, and if such $v$ is not in $A_0$, then the resulting pair $ (\mbf{A}^{k},\mbf{B}^{k})$ is not perfect with probability at least $\eps_1$.

Combining, (\ref{e:c21}) and (\ref{e:c22}) we obtain
$$|A_0|\geq \left(1-{k}\binom{{k}}{r-1}\frac{2}{\eps_1 e_r}\delta\right) |A| \geq \left(1 -\frac{\eps_1}{2}\right)|A|.$$
Moreover, by Lemma~\ref{lem:balanced}, we have $|A| \geq  (1/r - \eps_1/2)n$, as $\delta \ll \eps_1$. It follows that $|A_0| \geq  (1/r - \eps_1)n$, as desired.

Analogously, we have $|B_0| \geq ((r-1)/r - \eps_1)n$, where $B_0$ is the set of all  $(B,k,\eps_1)$-regular vertices in $B$, finishing the proof of the claim.
 \end{proof}
Our final step is to show  that all vertices of $F$ are $(A,k,\eps_2)$-regular or $(A,k,\eps_2)$-regular for some $\eps_1 \ll \eps_2 \ll 1/k$.\begin{claim}\label{cl:regular2} Let $v \in V$.
	\begin{description}
		\item[(C31)] if there exists $f_0 \in L(v)$ such that $f_0 \cap A_0 \neq \emptyset$ then $v$ is  $(B,k,\eps_2)$-regular, 	
		\item[(C32)] otherwise, $v$ is  $(A,k,\eps_2)$-regular.
	\end{description}
 \end{claim}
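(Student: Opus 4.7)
Both parts of the claim will be proved by upper bounding the failure probability of the relevant perfectness test by $\eps_2$. Two cases are immediate: if $v \in A_0$, Claim~\ref{cl:regular}(C11) forces $f \cap A_0 = \emptyset$ for every $f \in L_F(v)$, so (C32) applies and follows from $v$ being $(A,k,\eps_1)$-regular; and if $v \in B_0$ with (C31)'s hypothesis, then $v$ is already $(B,k,\eps_1)$-regular. The remaining cases---$v \in V \setminus (A_0 \cup B_0)$, plus $v \in B_0$ with no $f \in L_F(v)$ meeting $A_0$---are handled by a common computation.

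With $\mbf{A}^*, \mbf{B}^*$ denoting the random parts in the appropriate test, Claim~\ref{cl:A0B0} gives $\brm{Pr}[\mbf{A}^* \not\subseteq A_0 \text{ or } \mbf{B}^* \not\subseteq B_0] = O(k\eps_1)$. On the complementary event, missing expected edges contribute $O(\delta k^r)$ via $|F \triangle S| \leq \delta n^r$; extra edges $T \in F$ with the wrong $A$-count that avoid $v$ are ruled out by Claim~\ref{cl:regular} (use (C11) if $|T \cap \mbf{A}^*| \geq 2$, (C12) if $T \subseteq \mbf{B}^*$); and extra edges $T = T' \cup \{v\}$ with $|T' \cap \mbf{A}^*| \geq 2$ are ruled out since $T$ would then contain two $A_0$-vertices. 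For (C32), the only remaining bad events require $T' \in L_F(v)$ with $T' \cap \mbf{A}^{k-1} \neq \emptyset$, but the hypothesis $T' \cap A_0 = \emptyset$ together with $\mbf{A}^{k-1} \subseteq A_0$ rules this out, yielding total failure probability $\leq \eps_2$. For (C31), the sole residual bad events come from $T' \in L_F(v) \cap \binom{B_0}{r-1}$ sampled with $T' \subseteq \mbf{B}^{k-1}$, with expected count $O\bigl(|L_F(v) \cap \binom{B_0}{r-1}| \cdot (k/n)^{r-1}\bigr)$.

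Hence (C31) reduces to the bound $|L_F(v) \cap \binom{B_0}{r-1}| = O(\eps_1 n^{r-1})$, which is the main obstacle. The plan is a $K_{r,r}^{(r)}$-embedding argument: if this count exceeded $\eps_1 n^{r-1}$, pick $T \in L_F(v) \cap \binom{B_0}{r-1}$ disjoint from $f_0$ and set $e_1 := T \cup \{v\}$; choose a star edge $e_2 := \{u\} \cup T' \in F$ with $T' \subseteq B_0$ disjoint from $T \cup f_0$, which exists by edit-distance abundance of star edges in $F$. The critical inter-edge pair $(v, u)$ is covered by the hypothesized edge $e = f_0 \cup \{v\}$, whose other $r - 2$ vertices $f_0 \setminus \{u\}$ are fresh. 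Inter-edge pairs of type $(A_0, B_0)$ or $(B_0, B_0)$ are covered by star edges of $F$; pairs $(v, y)$ with $y \in T' \subseteq B_0$ are covered by another element of $L_F(v) \cap \binom{B_0}{r-1}$ through $y$, plentiful by the assumed lower bound. A greedy selection over the $r^2$ extensions, each requiring $r - 2$ private vertices, succeeds by a union bound against the constant-size used set, producing a copy of $K_{r,r}^{(r)}$ in $F$ and contradicting $F \in \brm{Forb}(K_{r,r}^{(r)})$. The chief technical difficulty lies in orchestrating this greedy embedding so that all $r^2$ extension edges remain pairwise fresh while lying in $F$; once the bound is established, the failure probability for (C31) is $O(k\eps_1 + \delta k^r + \eps_1 k^{r-1}) \leq \eps_2$ under $\delta \ll \eps_1 \ll \eps_2 \ll 1/k \ll 1/r$.
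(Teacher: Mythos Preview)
Your reduction is sound and matches the paper's: once you bound $|L_F(v)\cap\binom{B_0}{r-1}|$ by $O(\eps_1 n^{r-1})$, the failure-probability computation goes through. The gap is in the embedding argument you use to establish that bound.

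Two steps do not follow from what you have. First, ``inter-edge pairs of type $(A_0,B_0)$ or $(B_0,B_0)$ are covered by star edges of $F$'' is not justified: the definitions of $A_0,B_0$ are statements about \emph{random $k$-samples}, and give no control over the number of $F$-edges through a \emph{fixed} pair $\{u,x\}$ or $\{x,y\}$. For instance, from $u\in A_0$ one gets that a uniformly random $(r-1)$-set $\mathbf b\subseteq B$ satisfies $\{u\}\cup\mathbf b\in F$ with probability $\ge 1-\eps_1$, but conditioning on $x\in\mathbf b$ costs a factor of order $n$, wiping out the bound. Second, ``pairs $(v,y)$ with $y\in T'$ are covered by another element of $L_F(v)\cap\binom{B_0}{r-1}$ through $y$'' does not follow from the global lower bound $|L_F(v)\cap\binom{B_0}{r-1}|>\eps_1 n^{r-1}$: nothing prevents these sets from avoiding a particular $y$ entirely. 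Attempting to fix this by restricting $T'$ to ``popular'' $y$'s runs into a parameter clash, since you also need $\{u\}\cup T'\in F$, and the popular set has density only $O(\eps_1)$ while the constraint $\eps_1\ll 1/k$ makes the relevant counts too small.

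The paper handles both issues at once by coupling the choice of the embedding to a single random sample. It samples $(\mathbf A^{k-1}\cup\{u\},\mathbf B^k)$ and a random $(r-1)$-set $\mathbf f\subseteq\mathbf B^k$, and introduces $B_1=\{b\in B:|L(v,b)|\ge n^{r-5/2}\}$. On the event that the sampled pair is perfect and $\mathbf f\in L(v)$, $\mathbf f\subseteq B_1$, $\mathbf f\cap e=\emptyset$, one sets $f_1=\mathbf f$ and takes $f_2$ inside the same $\mathbf B^k$; then all non-$v$ pairs lie inside the perfect pair $(A',B')$ and are covered there, while the $(v,y)$ pairs are covered because the relevant vertices lie in $B_1$ and so have codegree $\ge n^{r-5/2}$ with $v$. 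This yields a $K_{r,r}^{(r)}$, so the conjunction is impossible, and one reads off $\Pr[\mathbf f\in L(v)]\le 2\eps_1$ after accounting for $\mathbf f\not\subseteq B_1$ (small by the definition of $B_1$) and $\mathbf f\cap e\ne\emptyset$ (small since $|e|=r$). Your proposal is missing precisely this coupling-with-a-perfect-pair idea and the auxiliary set $B_1$; without them the greedy embedding cannot be carried out.
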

\begin{proof} We start by proving (C31). Let $e \in F$, $u \in A_0$ be such that $\{v,u\} \in e$.

Let $B_1 = \{b \in B | |L(v,b)| \geq n^{r-5/2} \}.$ Suppose that there exists $f_1 \in L(v)$ such that $f_1 \subseteq B_1$, $f_1 \cap e = \emptyset$ and  $(A',B') \in \mc{P}$ such that $u \in A'$, $|A'|,|B'|\gg r$ and  $f_1 \subseteq B'$. Choosing $f_2 \in B'$ with $|f_2|=r-1, f_2 \cap (f_1 \cup e) = \emptyset$ one can find an extension of $\{\{v\} \cup f_1, \{u\} \cup f_2 \}$ in $F$, using $e$ to extend $\{v,u\}$, extending the pairs of vertices not containing $v$ in $F[A' \cup B']$ greedily, which can be done as $(A',B') \in \mc{P}$, and extending the remaining pairs containing $v$ greedily using the fact $f_2 \subseteq B_1$. Thus no such choice of $f_1$ and $(A',B')$ is possible. 

Let $b$ be an element of $B-B_1$, and let $\mbf{f}_b$ be a random subset of $B$ of size $r-1$ containing $b$.
Then  
\begin{equation}\label{e:c3f1}
\brm{Pr}[\mbf{f}_b \in L(v)] = \frac{|L(v,b)|}{(|B|-1)^{r-2}} \leq \frac{n^{r-5/2}}{(n/2)^{r-2}} = \frac{2^{r-2}}{\sqrt{n}}.
\end{equation}
Let $\mbf{f}$ now be a random subset of $B$ of size $r-1$ then it follows from (\ref{e:c3f1}) that
\begin{equation}\label{e:c3f2}
\brm{Pr}[\mbf{f} \in L(v) | \mbf{f} \not \subseteq B_1]  \leq \frac{(r-1)2^{r-2}}{\sqrt{n}}
\end{equation}
Note next that $\mbf{f}$  can be chosen by choosing a random pair $(\mbf{A}^{k-1},\mbf{B}^{k})$ and randomly choosing $\mbf{f} \subseteq \mbf{B}^{k}$. Using (\ref{e:c3f2}) and the discussion above, we have
\begin{align*}\brm{Pr}[\mbf{f} \in L(v)] &= \brm{Pr}[\mbf{f} \in L(v) \land \mbf{f} \subseteq B_1 \land \mbf{f} \cap e = \emptyset] +\brm{Pr}[\mbf{f} \cap e \neq \emptyset] + \brm{Pr}[\mbf{f} \in L(v) \land \mbf{f} \not \subseteq B_1] \\ &\leq \brm{Pr}[(\mbf{A}^{k-1} \cup \{u\},\mbf{B}^{k}) \not \in \mc{P}] + \brm{Pr}[\mbf{f} \cap e \neq \emptyset] + \brm{Pr}[\mbf{f} \in L(v) | \mbf{f} \not \subseteq B_1] \\ &\leq \eps_1 + \frac{(r-1)r}{|B|} + \frac{(r-1)2^{r-2}}{\sqrt{n}} \leq 2\eps_1. \end{align*}

Thus $|L(v) \cap B^{(r-1)}| \leq 2\eps_1 n^{r-1}$. By (C11) no element of $L(v)$ contains two elements of $A_0$.

Let $T =  \{f \in V(F)^{(r-1)} | |f \cap A|=1\}$, and let $L_0(v) =  L(v) \cap T$. By the above we have, that every element of $L(v) - L_0(v)$ is contained in $B^{(r-1)}$ or contains an element of $A - A_0$. Therefore $$|L(v) - L_0(v)| \leq  |A-A_0|n^{r-2} +  2\eps_1 n^{r-1} \leq 3\eps_1 n^{r-1}.$$
Thus $|L_0(v)| \geq 
 (d_r-\delta-3\eps_1 ) n^{r-1}$.
Now let $\mbf{f} \subseteq T$ be chosen uniformly at random. Note that $$\brm{Pr}[\mbf{f}  \not \in L_0(v)] \leq 1 - \frac{|L_0(v)|}{|T|} \leq  1 -\frac{d_r-\delta-3\eps_1}{d_r+r\eps_1} \leq \frac{(r+4)\eps_1}{d_r}.$$

Thus
   \begin{align}\label{e:c31}
\brm{Pr}&[(\mbf{A}^{k},\mbf{B}^{k-1}\cup\{v\})\notin \mc{P})] \notag\\&\leq  \brm{Pr}[(\mbf{A}^{k},\mbf{B}^{k-1})\not \in \mc{P})] +  \brm{Pr}[ \exists f \subseteq \mbf{A}^{k} \cup \mbf{B}^{k-1} : f \in T - L_0(v)] \notag \\& {k}\binom{k-1}{r-1}\frac{2}{e_r}\delta + {k}\binom{k-2}{r-2}\frac{(r+4)\eps_1}{d_r} \notag\\&\leq \eps_2,
\end{align}
  as desired.
  
It remains to prove (C32). However,  if there exists no  $I_0 \in L(v)$ such that $I_0 \cap A_0 \neq \emptyset$, then paralleling the computations above we get $|L(v) \cap B^{(r-1)}| \geq  (d_r-\delta-\eps_1 ) n^{r-1}$, and one concludes that $v$ is $(A,k,\eps_2)$-regular using a computation analogous to (\ref{e:c31}).  
\end{proof}

As mentioned earlier, the proof of Claim 3 concludes the proof of Theorem~\ref{vertexstab}. Indeed, Claim 3 implies that $V(F)$ can be partitioned into the set $A'$ of $(A,k,\eps_2)$-regular vertices and a set $B'$ of $(B,k,\eps_2)$-regular vertices. It follows from Claim 1 that $(A',B')$ is a star partition of $F$, and so $F$ is a star.  

\section{Concluding Remarks}

\subsection*{Towards a Complete Intersection Theorem for Lagrangians.}

As mentioned in the introduction our proof of Theorem~\ref{maxLagrangian} relies significantly on the techniques introduced by Ahlswede and Khachatrian in their proof of the Complete Intersection Theorem~\cite{complete}, which determines the maximum number of edges in a $t$-intersecting $r$-graph on $n$ vertices. It seems therefore natural to ask whether Theorem~\ref{maxLagrangian} can be extended to determine the supremum of Lagrangians  of $t$-intersecting $r$-graph. To continue the discussion we need to recall the statement of the Complete Intersection Theorem. For integers $t \geq 1,i \geq 0$, and $r \geq t+i$, let $F(r,t,i)$ denote the $(\leq r)$-graph with $ F(r,t,i) \subseteq 2^{[t+2i]}$ such that $t+i \leq |e| \leq r$ for every $e \in F(r,t,i)$. Then it is easy to see that $F(r,t,i)$ is $t$-intersecting. Consider now $n \geq 2t+i$ and an $r$-graph $G(r,t,i,n)$ consisting of all edges $e \subseteq [n]$, $|e|=r$ such that $e \cap [2t+i] \in F(r,t,i)$. Then $G(r,t,i,n)$ is $t$-intersecting $r$-graph. 
\begin{thm}[The Complete Intersection Theorem~\cite{complete}]\label{t:complete}
 Let $G$ be a $t$-intersecting $r$-graph with $\brm{v}(G)=n$ then 
  $$|G| \leq \max_{0 \leq i \leq r-t}|G(r,t,i,n)|.$$	
\end{thm}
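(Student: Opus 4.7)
The strategy I would follow mirrors the compression approach of Section~\ref{Lagrangian}, adapted from the intersecting to the $t$-intersecting setting. First I would reduce to left-compressed families: extending the shift operator $R_{ij}$ from Section~\ref{Lagrangian} verbatim to $r$-graphs on $[n]$, a short case check shows that if $|e \cap f| \geq t$ and the images $R_{ij}(e), R_{ij}(f)$ are distinct, then still $|R_{ij}(e) \cap R_{ij}(f)| \geq t$. Since $R_{ij}$ is injective we have $|R_{ij}(G)| = |G|$, so iterating we may assume $G$ is invariant under every $R_{ij}$, i.e.\ left-compressed.

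Next, for a left-compressed $t$-intersecting $G$ I would extract a \emph{generating set} $H = G \downarrow [s]$ on a minimal prefix $[s]$ such that every pair of edges of $G$ shares at least $t$ elements of $[s]$. Compression should give a bound $s \leq s(r,t)$ independent of $n$, and $G$ would be recovered as the collection of all $r$-subsets of $[n]$ whose trace on $[s]$ lies in $H$; so $|G|$ is a polynomial-in-$n$ function of $H$ alone. At this point the problem reduces to a finite combinatorial optimization: among compressed $t$-intersecting generating sets $H \subseteq 2^{[s]}$ (meaning any two elements of $H$ meet in at least $t$ points of $[s]$), identify those maximizing $|G|$, and match them up against the Frankl candidates $F(r,t,i)$.

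The third step, which I expect to be the hardest, is to show that the maximum is actually achieved by $H = F(r,t,i)$ for some $i$. My plan here would be the Ahlswede--Khachatrian \emph{pushing-pulling} argument: if $H$ is not of Frankl form, locate a ``short'' set $e \in H$ with $|e|$ strictly below the threshold $t+i$ corresponding to the optimal $i = i(n)$, and produce a local modification of $H$ that keeps it compressed and $t$-intersecting while strictly enlarging the associated $G$; symmetrically, a ``long'' set above the threshold would be pulled down. The main obstacle is that the optimal $i$ depends sensitively on $n$: for $n$ close to $2t$ the extremal $i$ is large, while for large $n$ it drops to $0$ (the trivial $[t]$-star $G(r,t,0,n)$). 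Pinning down the threshold where $|G(r,t,i,n)| \geq |G(r,t,i+1,n)|$ and ruling out exotic generating sets that straddle these thresholds is the quantitative heart of the proof and would require a delicate monotonicity comparison between adjacent Frankl families, together with an induction on $s$ to handle the boundary cases where a single push or pull fails to strictly improve $|G|$.
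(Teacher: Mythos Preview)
The paper does not give its own proof of Theorem~\ref{t:complete}; it is quoted from Ahlswede and Khachatrian~\cite{complete} as background for Conjecture~\ref{c:complete} in the concluding remarks, so there is nothing in the paper to compare your proposal against.

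That said, your plan is essentially a correct outline of the original Ahlswede--Khachatrian argument (and the paper itself says its Section~\ref{Lagrangian} techniques are adapted from~\cite{complete}, so the resemblance you note is expected). A few points where your sketch would need to be tightened if you actually carried it out: (i) the fact that shifting preserves $t$-intersection requires a short argument, not just ``a short case check,'' because when both $R_{ij}(e)=e$ and $R_{ij}(f)=f$ one must use that the reason $e$ (say) was fixed is either $j\notin e$, $i\in e$, or $e\setminus\{j\}\cup\{i\}\in G$, and the last alternative is where the $t$-intersection hypothesis on the original family is actually invoked; (ii) the bound $s\le s(r,t)$ for the support of a compressed extremal family is not automatic from compression alone---in~\cite{complete} this emerges only after the pushing--pulling analysis shows the extremal generating sets are Frankl families, not before; (iii) your third step is, as you acknowledge, the real content of the theorem, and your description of it (``locate a short set and push, locate a long set and pull'') elides the main difficulty: the local moves in~\cite{complete} are not single-edge replacements but rather exchange an entire layer of the generating set for another, and proving that such an exchange strictly increases $|G|$ unless $H=F(r,t,i)$ requires the precise threshold computation you mention. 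So the plan is sound in spirit but is a plan, not a proof.
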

 
The analogous statement for Lagrangians is best stated in terms of weighted $t$-intersecting set systems. Define the weighted $t$-intersecting set system $(G,s,p)$ analogously to our definition of a weighted intersecting set system in Section~\ref{Lagrangian}, except that the $(\leq r)$-graph $G$ is required to be $t$-intersecting. 

\begin{conjecture}\label{c:complete}
Let $G$ be a $t$-intersecting $r$-graph then $$r!\lambda(G) \leq \max_{0 \leq i \leq r-t} w_p(F(r,t,i)),$$ 
where the maximum is implicitly taken over all $p:[t+2i]^{+} \to [0,1]$ such that $(F(r,t,i),t+2i,p)$ is a weighted $t$-intersecting set system. 	
\end{conjecture}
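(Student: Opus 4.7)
Following the blueprint of Theorem~\ref{t:wiss}, the plan is first to introduce the natural weighted analogue of a $t$-intersecting family: a \emph{weighted $t$-intersecting set system} (w.i.t.s.s.) is a triple $(G,s,p)$ with $G \subseteq 2^{[s]}$ a $t$-intersecting $(\leq r)$-graph and $p:[s]^{+}\to [0,1]$ a probability distribution non-increasing on $[s]$. As in Section~\ref{Lagrangian}, a Lagrangian-attaining distribution for any $t$-intersecting $r$-graph $H$ yields, via restriction to an appropriate $t$-transversal generating set in the sense of~\cite{complete}, a w.i.t.s.s. $(G,s,p)$ with $w_p(G) \geq r!\lambda(H)$, so it suffices to bound $w_p(G) \leq \max_{0 \leq i \leq r-t} w_{p_i}(F(r,t,i))$ for every w.i.t.s.s., where $p_i$ is the distribution induced from $p$ on $[t+2i]^{+}$. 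I would call a w.i.t.s.s. a \emph{target} if it maximizes $w_p(G)$ among all w.i.t.s.s.\ whose edge set is not a subset of some $F(r,t,i)$, with ties broken lexicographically by minimal $s$, then minimal $\sum_{e \in G}\sum_{j \in e} j$, subject to $p(s)>0$.

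The compression step should generalize essentially verbatim. Since Frankl's shift $R_{ij}$ preserves $t$-intersectness for every $t$ and cannot decrease $w_p$ when $p$ is non-increasing on $[s]$, the arguments of Lemma~\ref{l:compress} and Corollary~\ref{c:compress} show that every target is strongly left-compressed: $e \setminus \{j\} \cup \{i\} \in G$ whenever $e \in G$, $j \in e$ and $1 \leq i < j \leq s$. An analogue of Lemma~\ref{l:prelimbound} should then yield two edges $e,f \in G$ with $|e \cap f|=t$ and $e \cup f = [s]$, forcing $s \leq 2r-t$, so for each $r,t$ only finitely many targets need to be analyzed.

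The heart of the argument is a $t$-intersecting analogue of Case~1 in the proof of Theorem~\ref{t:wiss}: remove the coordinate $s$, absorb $p(s)$ into $p(\infty)$, and show the resulting smaller system decomposes into a bounded number of pieces, each a w.i.t.s.s.\ on $[s-1]$, so that induction on $s$ applies. Lemma~\ref{l:contribution}, being a purely probabilistic statement about product measures, carries over unchanged and gives a factor-of-$2$ weight gain per such reduction. What must be adapted is the combinatorial partitioning of the \emph{problem edges}---those $e,f \in G$ with $|(e\cap f)\setminus\{s\}|<t$, whose simultaneous restriction to $[s-1]$ destroys $t$-intersectness---into $t$-intersecting subfamilies that survive the reduction.

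The main obstacle, which I expect to dominate the technical work, is precisely this partitioning. For $t=1$ each edge has at most one problem partner and so $G$ splits into two intersecting pieces, matching the factor-of-$2$ gain from Lemma~\ref{l:contribution}. For $t \geq 2$ an edge containing $s$ may have many problem partners, and a naive partition would need more than two parts, rendering the averaging argument ineffective. Two potentially fruitful routes are (i) strengthening Lemma~\ref{l:contribution} to a gain of order $t+1$ by exploiting the fact that edges in a left-compressed $t$-intersecting family have size at least $t$ and so lose a larger relative weight when restricted, or (ii) using the structural results of Ahlswede and Khachatrian~\cite{complete} on kernels of left-compressed $t$-intersecting families to extract a small transversal of the problem edges, enabling a bounded-part decomposition directly. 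Either route requires a genuinely new ingredient beyond the techniques of Section~\ref{Lagrangian}.
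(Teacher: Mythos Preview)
The statement you are attempting to prove is labeled \emph{Conjecture} in the paper, and indeed the paper offers no proof of it. The discussion surrounding Conjecture~\ref{c:complete} in the concluding remarks only records that the case $t=1$ follows from Theorem~\ref{maxLagrangian} (for $r\geq 4$), from~\cite{hefetzkeevash} (for $r=3$), and is easy for $r=2$; for general $t$ the paper explicitly leaves the statement open. There is therefore no ``paper's own proof'' to compare your proposal against.

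Your write-up is not a proof but a research outline, and you yourself flag the gap: the averaging step in Case~1 of Theorem~\ref{t:wiss} works because for $t=1$ each edge through $s$ has at most one ``problem partner'', yielding a two-part decomposition that exactly matches the factor-of-$2$ gain in Lemma~\ref{l:contribution}. For $t\geq 2$ you correctly observe that an edge may have many partners $f$ with $|(e\cap f)\setminus\{s\}|<t$, so neither a two-part split nor the unmodified Lemma~\ref{l:contribution} suffices. Both of your proposed fixes---boosting the weight gain in Lemma~\ref{l:contribution} to order $t+1$, or extracting a bounded transversal via the Ahlswede--Khachatrian kernel machinery---are plausible directions, but neither is carried out, and the paper gives no hint that either succeeds. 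Note also the paper's final remark: even the expectation that for $r\gg t$ the optimum is achieved at $i=0$ (the principal family $F(r,t,0)$) is \emph{false} for large $t$, so the extremal structure for general $t$ is genuinely more delicate than in the $t=1$ case your outline is modeled on. In short, what you have written is a reasonable sketch of how one might begin attacking the conjecture, but it is not a proof, and the paper does not claim one.
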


The validity of Conjecture~\ref{c:complete} for $t=1$ follows from Theorem~\ref{maxLagrangian} for $r \geq 4$. For $r=3$, it follows from the results in~\cite{hefetzkeevash} mentioned in the introduction. It is also easy to verify for $r=2$.

Theorem~\ref{t:complete} is relatively easy to establish for $n \gg r,t$. In this regime the maximum $t$-intersecting $r$-graphs are principal, i.e. consist of all set of size $r$ containing a fixed set of size $t$. One would expect the same phenomenon to hold in our setting, i.e. for $r \gg t$, the weighted $t$-intersecting set system of maximum weight consists of single edge of size $t$. The validity of this intuition for $t=1$ is supported by Theorem~\ref{maxLagrangian}. Surprisingly, the first author have shown that the above statement is false for large $t$.

\subsection*{Vertex Local Stability.}

We have taken slightly non-standard route in the proof of Theorem~\ref{vertexstab} by considering a vertex to be well-behaved if probability that a sample containing this vertex matches the expected structure. A different proof of   Theorem~\ref{vertexstab} is given in the third author's PhD thesis~\cite[Theorem 6.2.1]{lianathesis}.

\subsection*{Acknowledgement}

Some of this research was performed in Summer 2015, when the first author was an undergraduate student at McGill University supported by an NSERC USRA scholarship.

\bibliographystyle{amsplain}
\bibliography{lib}

\end{document}